\pgfplotsset{compat=1.16}
\theoremstyle{definition}
\newtheorem{theorem}{Theorem}[section]
\newtheorem{proposition}[theorem]{Proposition}
\newtheorem{lemma}[theorem]{Lemma}
\newtheorem*{definition}{Definition}
\newtheorem{remark}[theorem]{Remark}
\newtheorem{example}[theorem]{Example}
\newtheorem{conjecture}[theorem]{Conjecture}
\newtheorem{problem}[theorem]{Problem}
\newtheorem*{theorem_no_number}{Theorem}
\title{Jet Graphs}
\date{\today}
\author{Federico Galetto}
\address{Department of Mathematics and Statistics, Cleveland State University, Cleveland, OH, 44115-2215, USA}
\email{\href{mailto:f.galetto@csuohio.edu}{\nolinkurl{f.galetto@csuohio.edu}}}
\urladdr{\href{https://math.galetto.org}{\nolinkurl{https://math.galetto.org}}}
\author{Elisabeth Helmick}
\address{Department of Mathematics and Statistics, Cleveland State University, Cleveland, OH, 44115-2215, USA}
\email{\href{mailto:e.helmick@vikes.csuohio.edu}{\nolinkurl{e.helmick@vikes.csuohio.edu}}}
\author{Molly Walsh}
\address{Department of Mathematics and Statistics, Cleveland State University, Cleveland, OH, 44115-2215, USA}
\email{\href{mailto:m.m.walsh79@vikes.csuohio.edu}{\nolinkurl{m.m.walsh79@vikes.csuohio.edu}}}
\keywords{jets, graphs, squarefree monomial ideals}
\subjclass[2010]{13F55, 05E40, 05C25, 05C69, 05C76}
\begin{document}

\begin{abstract}
  We define an operation of jets on graphs inspired by the
  corresponding notion in commutative algebra and algebraic
  geometry. We examine a few graph theoretic properties and invariants
  of this construction, including chromatic numbers, co-chordality,
  and vertex covers.
\end{abstract}

\maketitle
\section{Introduction}
\label{sec:introduction}

A differentiable function is one whose graph admits a tangent line at
any given point. Calculus students learn that differentiable functions
have good properties, for example, their graphs are smooth, presenting
no kinks, cusps, or sharp corners. By contrast, a function that is not
differentiable can have any of the aforementioned singularities.
These simple notions from calculus extend to more complicated
situations. An affine variety is the set of solutions of a system of
polynomial equations. Affine varieties, which are fundamental objects
in algebraic geometry, are smooth if they admit tangent planes (of the
appropriate dimension) at all their points. The collection of all the
tangent planes to an affine variety is known as the tangent scheme of
the variety. Its study can yield useful information on the
singularities of the original variety. In fact, the tangent scheme is
simply the first in a family of objects, called jet schemes, which are
important in singularity theory. The study of singularities via jets
was initiated by J.~Nash \cite{MR1381967}, and has generated
significant attention due to its connections with other topics of
interest in geometry, such as motivic integration and birational
geometry \cite{MR1905328,MR1856396,MR2483946}.

Jet schemes have been studied primarily from a geometric perspective.
For example, some progress has been made towards understanding jets of
determinantal varieties of generic and symmetric matrices thanks to
works such as
\cite{MR2389248,MR2100311,MR2166800,MR3270176,MR3990994}. The articles
just cited focus on describing irreducible components of jets of
determinantal varieties, along with certain algebraic invariants, such
as the Hilbert series. Another interesting article by R.A.~Goward and
K.E.~Smith \cite{MR2229478} describes the equations for the
reduced structure of the jet schemes of an arbitrary monomial scheme,
showing in particular that it is itself a monomial scheme.

The present article uses the work of Goward and Smith as a starting
point to define an operation of jets for graphs that is compatible
with the existing notion of jets in algebra and geometry.  The
construction proceeds as follows.
\begin{enumerate}
\item Consider a simple undirected graph $G$.
\item\label{item:1} Construct the edge ideal $I(G)$ of $G$ in a
  suitable polynomial ring (see \cite[Definition 3]{MR3184120}).
\item\label{item:2} Construct the ideal of jets $\mathcal{J}_s (I(G))$
  of $I(G)$ of a certain order $s\in \mathbb{N}$.
\item\label{item:3} Take the radical $\sqrt{\mathcal{J}_s (I(G))}$ of
  the ideal $\mathcal{J}_s (I(G))$, which is a squarefree quadratic
  monomial ideal.
\item\label{item:4} Construct the simple undirected graph
  $\mathcal{J}_s (G)$ associated with the ideal
  $\sqrt{\mathcal{J}_s (I(G))}$.
\end{enumerate}
As observed by Goward and Smith, the ideal of jets constructed in step
\ref{item:2} is not necessarily a monomial ideal; however, its radical
$\sqrt{\mathcal{J}_s (I(G))}$, constructed in step \ref{item:3}, is a
monomial ideal. It is also easy to see that
$\sqrt{\mathcal{J}_s (I(G))}$ is quadratic because the edge ideal
$I(G)$ from step \ref{item:1} is quadratic. This ensures that the
construction in step \ref{item:4} can be carried through, resulting
in a graph that we call the \emph{graph of $s$-jets} of $G$.

There are many fruitful connections between combinatorics and algebra,
and more specifically between graph theory and commutative algebra
(see, for example, \cite{MR3184120}). We believe it would be
interesting to further develop these connections so that they can be
applied to the study of jets of monomial ideals. The present work
represents a preliminary step in this direction. We define jets of
graphs and explore some graph theoretic aspects of this construction,
with a focus on properties that are significant because of their
relation with commutative algebra. The article is structured as
follows. Section \ref{sec:backgr-defin} defines jet graphs and records
simple observations used throughout the rest of the paper. In section
\ref{sec:chromatic-number-jet}, we study the chromatic number of jet
graphs and prove the following (see Theorem \ref{thm:2}).
\begin{theorem_no_number}
  The graph $G$ has chromatic number $c$ if and only if for every
  $s\in \mathbb{N}$ $\mathcal{J}_s (G)$ has chromatic number $c$.
\end{theorem_no_number}
In section \ref{sec:when-are-jet}, we turn our attention to co-chordal
graphs, and we are able to prove the following general result (see
Theorem \ref{thm:3}).
\begin{theorem_no_number}
  If a graph $G$ has diameter greater than or equal to 3, then for
  every integer $s \geqslant 1$ the graph $\mathcal{J}_s(G)$ is not
  co-chordal.
\end{theorem_no_number}
As for graphs with diameter smaller than 3, we are able to show that
all jet graphs of complete graphs and star graphs are co-chordal (see
Propositions \ref{pro:1} and \ref{pro:2}). Finally, we turn our
attention to vertex covers of jet graphs in Section
\ref{sec:vertex-covers-jet}. There we provide some ways to construct
vertex covers of jet graphs from covers of the original graphs (see
Propositions \ref{pro:3} and \ref{pro:4}). We also show that all jets
of a complete bipartite graph $K_{n,n}$ are very well covered (i.e.,
all their vertex covers contain half of the vertices). Wherever
applicable throughout the paper, we highlight connections between
commutative algebra and graph theory. Section \ref{sec:open-questions}
contains some open problems and closing remarks.

The research leading to this manuscript was conducted in the summer of
2020. The second and third author were funded by an Undergraduate
Summer Research Award sponsored by the Office of Research at Cleveland
State University; all three authors are very grateful for this
opportunity. The first author also wishes to thank Adam Van Tuyl for
many useful suggestions in the planning phase of this project. We are
indebted to the anonymous referees who provided valuable feedback and
helped improve the quality of this work.

\section{Background and Definitions}
\label{sec:backgr-defin}

We start by recalling the definition of jets. Consider the polynomial
ring $R = \Bbbk [x_1,\dots,x_n]$ with coefficients in a field
$\Bbbk$. The choice of field $\Bbbk$ has no bearing on the content of
this work. Let $I = \langle f_1,\dots,f_r\rangle$ be an ideal in
$R$. For $s\in \mathbb{N}$, we define a new polynomial ring
\begin{equation*}
  \mathcal{J}_s (R) = \Bbbk [x_{i,j} \,|\, i=1,\dots,n, j=0,\dots,s]
\end{equation*}
with coefficients in the same field $\Bbbk$ and $(s+1)n$ variables
$x_{i,j}$. For each $i=1,\dots,n$, we will perform the substitution
\begin{equation*}
  x_i \mapsto x_{i,0} + x_{i,1} t + x_{i,2} t^2 + \dots + x_{i,s} t^s
  = \sum_{j=0}^s x_{i,j} t^j.
\end{equation*}
This substitution takes elements of $R$ to elements of
$\mathcal{J}_s (R) [t]$. Applying this substitution to a generator
$f_i$ of $I$ gives the following sum decomposition:
\begin{equation*}
  f_i \left( \sum_{j=0}^s x_{1,j} t^j, \dots, \sum_{j=0}^s x_{n,j} t^j \right)
  = \sum_{j\geqslant 0} f_{i,j} t^j,
\end{equation*}
where the coefficients $f_{i,j}$ are polynomials in
$\mathcal{J}_s (R)$. The \emph{ideal of $s$-jets} of
$I = \langle f_1,\dots,f_r\rangle$ is the ideal in the polynomial ring
$\mathcal{J}_s (R)$ defined by
\begin{equation*}
  \mathcal{J}_s (I) = \langle f_{i,j} \,|\,
  i=1,\dots,r, j=0,\dots,s\rangle.
\end{equation*}
For more details on the construction and motivation behind jets, the
reader may consult \cite{MR2483946}.

We are interested in studying jets of edge ideals of graphs. For a
simple graph $G$ on vertices $\{x_1,\dots,x_n\}$, the \emph{edge ideal}
$I(G)$ is the ideal in $R = \Bbbk [x_1,\dots, x_n]$ generated by the
squarefree monomials of degree two corresponding to edges of $G$,
i.e.:
\begin{equation*}
  I(G) = \langle x_i x_j \,|\, \{x_i,x_j\} \text{ is an edge of } G\rangle.
\end{equation*}
For an introduction to edge ideals, we recommend \cite{MR3184120}.

\begin{remark}
  To make the notation for jets less cumbersome, one may label
  vertices of a graph $G$ by different letters $a,b,c,\dots$ without
  subscripts. In this case, the edge ideal $I(G)$ is a subset of the
  polynomial ring $R = \Bbbk [a,b,c,\dots]$. Applying jets in this
  scenario produces an ideal $\mathcal{J}_s (I(G))$ in the polynomial
  ring
  $\mathcal{J}_s (R) = \Bbbk [a_j,b_j,c_j,\dots \,|\,
  j=0,\dots,s]$. The variable substitution explained above takes the
  form
  \begin{equation*}
    a \mapsto a_0 + a_1 t + a_2 t^2 + \dots + a_s t^s
    = \sum_{j=0}^s a_j t^j,
  \end{equation*}
  and similarly for other letters. We will use this notation in many
  of our examples.
\end{remark}

\begin{example}\label{exa:1}
  Consider the connected simple graphs on three vertices: the path of
  length two, which we denote by $P_3$, and the complete graph $K_3$ (see
  Figures \ref{fig:1} and \ref{fig:2} for a labeling of the
  vertices). The edge ideals for these two graphs are
  $I(P_3)=\langle ac,cb \rangle$ and
  $I(K_3)=\langle ac,cb,ab\rangle$. From here, we are able
  to find the first order jets for both ideals by replacing $a$ with
  $a_0+a_1t$, $b$ with $b_0+b_1t$, and $c$ with $c_0+c_1t$, then
  extracting coefficients of powers of $t$. This procedure gives the
  ideals
  \begin{equation*}
    \begin{split}
      \mathcal{J}_1 (I(P_3)) &= \langle a_0c_0,b_0c_0,c_0a_1+a_0c_1,c_0b_1+b_0c_1 \rangle,\\
      \mathcal{J}_1 (I(K_3)) &= \langle
      a_0b_0,a_0c_0,b_0c_0,b_0a_1+a_0b_1,c_0a_1+a_0c_1,c_0b_1+b_0c_1\rangle.
    \end{split}
  \end{equation*}
  However, these ideals are not squarefree monomial ideals, hence they
  do not correspond to graphs in an obvious way.
\end{example}

As the example shows, jets of squarefree monomial ideals need not be
monomial ideals. In particular, there is no obvious way to associate
back a graph to the jets of the edge ideal of a given graph. To work
around this issue, we appeal to a result of Goward and Smith
\cite[Theorem 2.1]{MR2229478}.

\begin{theorem}\label{thm:1}
  Let $I$ be an ideal generated by squarefree monomials in the
  variables $x_1,\dots, x_n$.  For every $s\in \mathbb{N}$, the
  radical $\sqrt{\mathcal{J}_s (I)}$ is a squarefree monomial ideal in
  the variables $x_{i,j}$ for $i=1,\dots,n$ and
  $j=0,\dots,s$. Moreover, $\sqrt{\mathcal{J}_s (I)}$ is minimally
  generated by the monomials
  \begin{equation*}
    x_{i_1,j_1} \cdots x_{i_r,j_r}
  \end{equation*}
  such that $x_{i_1} \cdots x_{i_r}$ is a minimal generator of $I$ and
  $\sum_{k=1}^r j_k \leqslant s$.
\end{theorem}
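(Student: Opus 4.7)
The plan is to introduce the squarefree monomial ideal $J \subseteq \mathcal{J}_s(R)$ generated by the claimed list of monomials, then show $J = \sqrt{\mathcal{J}_s(I)}$ by proving each inclusion separately. The containment $\sqrt{\mathcal{J}_s(I)} \subseteq J$ is direct: for a minimal generator $f_i = x_{i_1}\cdots x_{i_r}$ of $I$ and $m \leqslant s$, the generator $f_{i,m}$ of $\mathcal{J}_s(I)$ expands as
\[
  f_{i,m} = \sum_{j_1+\cdots+j_r = m} x_{i_1,j_1}\cdots x_{i_r,j_r},
\]
and each summand satisfies $\sum_k j_k = m \leqslant s$, making it a generator of $J$. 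So $\mathcal{J}_s(I) \subseteq J$, and as $J$ is a squarefree monomial ideal (hence radical), $\sqrt{\mathcal{J}_s(I)} \subseteq J$.

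For the reverse inclusion I would work over the algebraic closure $\overline{\Bbbk}$ (harmless since $J$ is monomial and faithfully flat descent recovers both $J$ and $\mathcal{J}_s(I)$ from their extensions to $\overline{\Bbbk}$) and apply Hilbert's Nullstellensatz. It suffices to show each listed generator $M = x_{i_1,j_1}\cdots x_{i_r,j_r}$ of $J$ vanishes at every point $(c_{i,j})$ of the vanishing locus of $\mathcal{J}_s(I)$ in $\overline{\Bbbk}^{(s+1)n}$. Setting $p_i(t) = \sum_{j=0}^s c_{i,j} t^j \in \overline{\Bbbk}[t]$, the vanishing of $f_{i,0},\ldots,f_{i,s}$ at $(c_{i,j})$ is equivalent to the divisibility $\prod_{k=1}^r p_{i_k}(t) \in (t^{s+1})$. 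Since $\overline{\Bbbk}[t]$ is a domain, the $t$-adic order of the product is the sum of orders, so $\sum_k \mathrm{ord}(p_{i_k}) \geqslant s+1$. If every $c_{i_k,j_k}$ were nonzero, then $\mathrm{ord}(p_{i_k}) \leqslant j_k$ for all $k$, whence $\sum_k j_k \geqslant s+1$, contradicting $\sum_k j_k \leqslant s$. Hence some $c_{i_k,j_k}$ must vanish, and $M$ lies in $\sqrt{\mathcal{J}_s(I)}$.

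Minimality of the generating set follows because any proper divisor of a listed generator $M$ uses $i$-indices forming a proper subset of the minimal generator $\{i_1,\dots,i_r\}$ of $I$; no such proper subset is itself a minimal generator of $I$, so the divisor cannot be divisible by any listed generator and hence is not in $J$. The main technical content lies in the order-of-vanishing argument in the second paragraph: once the substitution $x_i \mapsto p_i(t)$ is viewed as landing in $\overline{\Bbbk}[t]$ and the jet equations are repackaged as a single divisibility by $t^{s+1}$, the claim reduces to arithmetic of $t$-adic orders.
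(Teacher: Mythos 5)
Your argument is correct, but there is nothing in the paper to compare it against: Theorem \ref{thm:1} is quoted verbatim from Goward and Smith \cite{MR2229478} and the paper supplies no proof, treating it as an external input. Your proof is a legitimate self-contained substitute. The easy inclusion (each $f_{i,m}$ is a sum of the listed monomials, so $\mathcal{J}_s(I)\subseteq J$ and hence $\sqrt{\mathcal{J}_s(I)}\subseteq J$ since $J$ is radical) is exactly the standard computation. For the hard inclusion you use the geometric ``truncated arc'' interpretation: a point of $V(\mathcal{J}_s(I))$ is a tuple of polynomials $p_i(t)$ with $\prod_k p_{i_k}(t)\equiv 0 \pmod{t^{s+1}}$ for each minimal generator, and the additivity of $t$-adic order over the domain $\overline{\Bbbk}[t]$ forces some $c_{i_k,j_k}=0$ whenever $\sum_k j_k\leqslant s$ (with the degenerate case $p_{i_k}=0$ absorbed by order $\infty$). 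This differs in flavor from Goward and Smith's original, purely algebraic argument, which exhibits a power of each listed monomial inside $\mathcal{J}_s(I)$ directly and so never leaves the polynomial ring; your route instead needs the Nullstellensatz and the faithfully flat descent step $\mathcal{J}_s(I)\,\overline{\Bbbk}[x]\cap \Bbbk[x]=\mathcal{J}_s(I)$ to handle an arbitrary field, which you correctly flag. The minimality argument is also sound: a proper divisor of a listed generator has $i$-support properly contained in the support of a minimal generator of $I$, and since no minimal monomial generator of $I$ divides another, no listed generator can divide that divisor. One small point worth making explicit is that squarefreeness of the minimal generators of $I$ is what guarantees the indices $i_1,\dots,i_r$ are distinct, so the listed monomials $x_{i_1,j_1}\cdots x_{i_r,j_r}$ are themselves squarefree and the order bound $\operatorname{ord}(p_{i_k})\leqslant j_k$ applies to $r$ genuinely independent factors.
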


Note that if $I$ is generated by squarefree monomials of degree two,
then so is $\sqrt{\mathcal{J}_s (I)}$ for every $s\in \mathbb{N}$.

\begin{example}
  Taking radicals of the first order jets of the edge ideals in
  Example \ref{exa:1}, gives
  \begin{equation*}
    \begin{split}
      \sqrt{\mathcal{J}_1 (I(P_3))}&=\langle a_0c_0,b_0c_0,c_0a_1,a_0c_1,c_0b_1,b_0c_1 \rangle,\\
      \sqrt{\mathcal{J}_1 (I(K_3))}&=\langle a_0b_0,a_0c_0,b_0c_0,b_0a_1,a_0b_1,c_0a_1,a_0c_1,c_0b_1,b_0c_1\rangle.
    \end{split}
  \end{equation*}
  Figures \ref{fig:1} and \ref{fig:2} display the graphs corresponding
  to these squarefree quadratic monomial ideals.
\end{example}

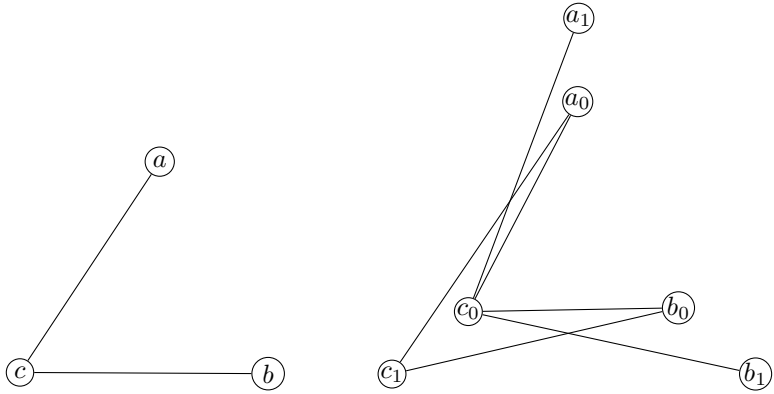
\begin{figure}[htb]
\begin{tikzpicture}     \newcommand*\pointsWeLEo{566.9366315835988/277.8409586235952/0/a,474.06586573071047/418.038945160875/1/c,638.9828751279186/418.8409038867429/2/b}          \newcommand*\edgesWeLEo{1/0,2/1}          \newcommand*\scaleWeLEo{0.02}          \foreach \x/\y/\z/\w in \pointsWeLEo {          \node (\z) at (\scaleWeLEo*\x,-\scaleWeLEo*\y) [circle,draw,inner sep=1.5pt] {$\w$};          }          \foreach \x/\y in \edgesWeLEo {          \draw (\x) -- (\y);          }      \end{tikzpicture} \hspace{1cm}
\begin{tikzpicture}       \newcommand*\pointsdoKKz{588.8912800591662/325.3219149507148/0/a_0,516.1884537622776/464.9783102959501/1/c_0,655.8251743033071/462.46722220023736/2/b_0,589.5964132562748/269.8801883292826/3/a_1,707.0686335438787/506.4631455640356/4/b_1,465.4554157015077/506.5240585769571/5/c_1}          \newcommand*\edgesdoKKz{1/0,2/1,3/1,4/1,5/0,5/2}          \newcommand*\scaledoKKz{0.02}          \foreach \x/\y/\z/\w in \pointsdoKKz {          \node (\z) at (\scaledoKKz*\x,-\scaledoKKz*\y) [circle,draw,inner sep=0pt] {$\w$};          }          \foreach \x/\y in \edgesdoKKz {          \draw (\x) -- (\y);          }      \end{tikzpicture}     
\caption{The graphs $P_3$ and $\mathcal{J}_1(P_3)$} \label{fig:1}
\end{figure}

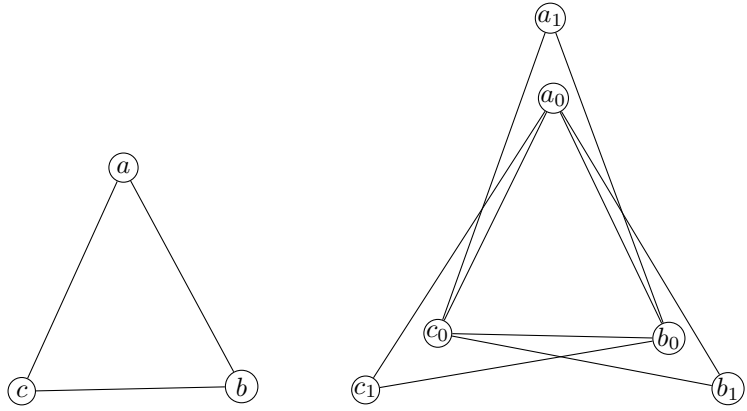
\begin{figure}[htb]
\begin{tikzpicture}     \newcommand*\pointstPRHe{613.9190294092147/306.2920128334028/0/a,692.4325245327401/451.87839756200975/1/b,546.2990322109628/454.9940547734596/2/c}          \newcommand*\edgestPRHe{1/0,2/0,2/1}          \newcommand*\scaletPRHe{0.02}          \foreach \x/\y/\z/\w in \pointstPRHe {          \node (\z) at (\scaletPRHe*\x,-\scaletPRHe*\y) [circle,draw,inner sep=1.5pt] {$\w$};          }          \foreach \x/\y in \edgestPRHe {          \draw (\x) -- (\y);          }      \end{tikzpicture} \hspace{1cm}
\begin{tikzpicture}        \newcommand*\pointsoEFxf{592.584530749817/299.49999729142974/0/a_0,669.3167619986655/459.1990050242759/1/b_0,515.7449874555851/455.87068533586995/2/c_0,590.532527552619/246.26547635844594/3/a_1,708.615469387584/492.66710822946135/4/b_1,467.77107823401235/493.532620250375/5/c_1}          \newcommand*\edgesoEFxf{1/0,2/0,2/1,3/1,3/2,4/0,4/2,5/0,5/1}          \newcommand*\scaleoEFxf{0.02}          \foreach \x/\y/\z/\w in \pointsoEFxf {          \node (\z) at (\scaleoEFxf*\x,-\scaleoEFxf*\y) [circle,draw,inner sep=0pt] {$\w$};          }          \foreach \x/\y in \edgesoEFxf {          \draw (\x) -- (\y);          }      \end{tikzpicture} \caption{The graphs $K_3$ and $\mathcal{J}_1(K_3)$}     \label{fig:2}
\end{figure}

In light of the discussion above, we make the following definition.

\begin{definition}
  Let $G$ be a simple graph. For $s\in \mathbb{N}$, the graph
  corresponding to the squarefree quadratic monomial ideal
  $\sqrt{\mathcal{J}_s (I(G))}$ is called the \emph{graph of $s$-jets}
  of $G$ or the \emph{$s$-jet graph} of $G$, and is denoted
  $\mathcal{J}_s (G)$.
\end{definition}

Note that $\mathcal{J}_0 (G)$ is isomorphic to $G$. The following
result gives a simple description of the edges in the $s$-jets of a
graph.

\begin{lemma}\label{lem:1}
  Let $G$ be a simple graph and let $a,b$ be distinct vertices of
  $G$. For every $s\in \mathbb{N}$, the set $\{a_i,b_j\}$ is an edge
  in $\mathcal{J}_s (G)$ if and only if $\{a,b\}$ is an edge of $G$
  and $i+j \leqslant s$.
\end{lemma}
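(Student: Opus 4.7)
The plan is to deduce the lemma directly from Theorem \ref{thm:1} applied to the edge ideal $I(G)$. By the definition preceding the statement, $\mathcal{J}_s(G)$ is the graph associated with the squarefree quadratic monomial ideal $\sqrt{\mathcal{J}_s(I(G))}$, so its edges are in bijection with the degree-two squarefree monomial generators of that ideal. The task therefore reduces to identifying exactly which monomials of the form $a_i b_j$ (with $a\neq b$) occur as minimal generators of $\sqrt{\mathcal{J}_s(I(G))}$.

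For the forward direction, if $\{a_i,b_j\}$ is an edge of $\mathcal{J}_s(G)$, then $a_i b_j$ is a minimal generator of $\sqrt{\mathcal{J}_s(I(G))}$. Theorem \ref{thm:1} then forces $ab$ to be a minimal generator of $I(G)$, i.e.\ $\{a,b\}$ is an edge of $G$, and it forces the exponent condition $i+j\leqslant s$. Conversely, if $\{a,b\}$ is an edge of $G$, then $ab$ is a minimal generator of $I(G)$ (all generators of $I(G)$ are minimal, being pairwise distinct squarefree quadratic monomials), so Theorem \ref{thm:1} yields $a_i b_j$ as a minimal generator of $\sqrt{\mathcal{J}_s(I(G))}$ whenever $i+j\leqslant s$. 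Since $a\neq b$ implies $a_i$ and $b_j$ are distinct variables, $a_i b_j$ is genuinely a squarefree degree-two monomial, so $\{a_i,b_j\}$ is an edge of $\mathcal{J}_s(G)$.

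The argument is essentially a translation between the edge/ideal dictionary and the explicit description of generators in Theorem \ref{thm:1}, so there is no substantive obstacle; the only care needed is checking the squarefree degree-two condition, which is immediate once one recalls that edges have $a\neq b$.
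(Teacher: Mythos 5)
Your argument is correct and is exactly the route the paper takes: the published proof simply states that the lemma is an immediate consequence of Theorem \ref{thm:1}, and your write-up just spells out the edge--ideal dictionary and the two directions of that deduction in detail. No further comment is needed.
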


\begin{proof}
  The statement is an immediate consequence of Theorem \ref{thm:1}.
\end{proof}

\begin{remark}
  Notice that if $a$ is a vertex in a simple graph $G$, then the graph
  $\mathcal{J}_s (G)$ contains no edge of the form $\{a_i,a_j\}$ for
  $i,j=0,\dots,s$.
\end{remark}

The next result establishes a basic property of jet graphs.

\begin{proposition}
  Let $G$ be a connected simple graph. For every $s\in \mathbb{N}$,
  the graph $\mathcal{J}_s (G)$ is connected.
\end{proposition}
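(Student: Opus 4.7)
The plan is to exhibit a connected spanning subgraph of $\mathcal{J}_s(G)$ by first locating a connected copy of $G$ inside it and then attaching every remaining vertex to this copy via a single edge. The key input will be Lemma \ref{lem:1}, which fully characterizes the edges of $\mathcal{J}_s(G)$.

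First I would isolate the ``$0$-level'' vertex set $V_0 = \{a_0 \mid a \in V(G)\}$. Applying Lemma \ref{lem:1} with $i=j=0$, the pair $\{a_0, b_0\}$ is an edge of $\mathcal{J}_s(G)$ if and only if $\{a,b\}$ is an edge of $G$ (the inequality $0+0\leqslant s$ being automatic). Hence the subgraph of $\mathcal{J}_s(G)$ induced on $V_0$ is isomorphic to $G$, and so it is connected by hypothesis.

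Next I would account for the remaining vertices $a_i$ with $1 \leqslant i \leqslant s$. Assuming $G$ has at least two vertices (the only case where there is anything to prove, since otherwise $\mathcal{J}_s(G)$ has a single vertex when $s=0$ and is the degenerate disjoint union of isolated points otherwise), connectedness of $G$ guarantees that $a$ has some neighbor $b$ in $G$. Then Lemma \ref{lem:1} gives $\{a_i, b_0\}$ as an edge of $\mathcal{J}_s(G)$, because $i + 0 = i \leqslant s$. Thus every vertex of $\mathcal{J}_s(G)$ is adjacent to some vertex of the connected copy of $G$ on $V_0$, and combining the two observations yields connectedness of $\mathcal{J}_s(G)$.

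I expect no real obstacle here: the result falls out of Lemma \ref{lem:1} once one notes that vertices at level $i$ can always be tied back to level $0$ (since $i + 0 \leqslant s$), and that level $0$ itself carries a faithful copy of $G$. The only item worth flagging is the trivial one-vertex case, which I would either exclude or dispatch by inspection.
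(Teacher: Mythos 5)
Your argument is essentially the paper's: both reduce to the copy of $G$ sitting on the level-$0$ vertices $\{a_0 \mid a \in V(G)\}$ (connected by Lemma \ref{lem:1} since $0+0\leqslant s$) and then attach each $a_i$ to some $b_0$ with $b$ a neighbor of $a$, using $i+0\leqslant s$; the paper just phrases this as explicit paths between arbitrary pairs $a_i$, $b_j$ rather than as a connected spanning subgraph. Your remark about the edgeless one-vertex graph is a fair observation (the paper silently ignores it too), but otherwise the proof is correct and matches the paper's.
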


\begin{proof}
  Let $a_i$ and $b_j$ be vertices in $\mathcal{J}_s (G)$ corresponding
  to vertices $a$ and $b$ in $G$. If $a$ and $b$ are adjacent in $G$,
  then the edges $\{a_i, b_0\}$, $\{b_0,a_0\}$, $\{a_0, b_j\}$ form a
  path joining $a_i$ and $b_j$.  If $a$ and $b$ are not adjacent in
  $G$, then there are vertices $v_1,\dots,v_r$ in $G$ and edges
  $\{a,v_1\}$, $\{v_1,v_2\}$, \dots, $\{v_{r-1},v_r\}$, $\{v_r, b\}$
  forming a path between $a$ and $b$ in $G$. Then there are edges
  $\{a_i,v_{1,0}\}$, $\{v_{1,0},v_{2,0}\}$, \dots,
  $\{v_{r-1,0},v_{r,0}\}$, $\{v_{r,0}, b_j\}$ connecting $a_i$ and
  $b_j$ in $\mathcal{J}_s (G)$.
\end{proof}

From now on, all graphs we consider are simple and connected.

\section{Chromatic Number of Jet Graphs}
\label{sec:chromatic-number-jet}

\begin{theorem}\label{thm:2}
  The graph $G$ has chromatic number $c$ if and only if for every
  $s\in \mathbb{N}$ $\mathcal{J}_s (G)$ has chromatic number $c$.
\end{theorem}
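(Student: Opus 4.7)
The plan is to establish the two inequalities $\chi(\mathcal{J}_s(G))\leqslant \chi(G)$ and $\chi(\mathcal{J}_s(G))\geqslant \chi(G)$ using Lemma \ref{lem:1}. The ``if'' direction of the biconditional is trivial, since $\mathcal{J}_0(G) \cong G$, so only the ``only if'' direction requires work.

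For the upper bound $\chi(\mathcal{J}_s(G))\leqslant \chi(G)$, I would take a proper coloring of $G$ with $c$ colors and lift it to a coloring of $\mathcal{J}_s(G)$ by assigning every jet vertex $a_j$ (for $j=0,\dots,s$) the color of the original vertex $a$. To verify this is proper, I would invoke Lemma \ref{lem:1}: any edge of $\mathcal{J}_s(G)$ has the form $\{a_i,b_j\}$ with $\{a,b\}$ an edge of $G$, and since $a$ and $b$ received different colors, so do $a_i$ and $b_j$.

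For the lower bound $\chi(G)\leqslant \chi(\mathcal{J}_s(G))$, the key observation is that $G$ embeds as an induced subgraph of $\mathcal{J}_s(G)$: by Lemma \ref{lem:1}, the vertex subset $\{a_0 : a \in V(G)\}$ induces a subgraph where $\{a_0,b_0\}$ is an edge if and only if $\{a,b\}$ is an edge of $G$ (since $0+0\leqslant s$ always holds). Restricting any proper coloring of $\mathcal{J}_s(G)$ to this induced copy of $G$ yields a proper coloring of $G$ with no more colors, giving the desired inequality.

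There is really no main obstacle here; both directions follow directly from Lemma \ref{lem:1}, which characterizes edges of $\mathcal{J}_s(G)$. The only subtlety worth pointing out is that the construction in the upper bound does not rely on $s$ at all, which is exactly why the chromatic number stays constant across all orders of jets.
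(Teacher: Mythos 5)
Your proposal is correct and follows essentially the same route as the paper: the same lift of a proper $c$-coloring of $G$ to $\mathcal{J}_s(G)$ by giving every $a_j$ the color of $a$ (verified via Lemma \ref{lem:1}), and the same lower bound via the induced copy of $G$ on the vertices $a_0$. No gaps.
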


\begin{proof}
  Suppose for every $s\in \mathbb{N}$ $\mathcal{J}_s (G)$ has
  chromatic number $c$. This includes $\mathcal{J}_0 (G)$ which is
  isomorphic to $G$. Therefore $G$ has chromatic number $c$.

  Now assume $G$ has chromatic number $c$. This means we can assign a
  coloring of the vertices $\{x_1,\dots,x_n\}$ of $G$ using $c$
  different colors so that any two adjacent vertices have different
  colors. We assign a coloring using the same $c$ colors on the set of
  vertices of $\mathcal{J}_s (G)$:
  \begin{equation}
    \{x_{i,j} \,|\, i=1,\dots,n, j=0,\dots,s\}
  \end{equation}
  by declaring that for all $j=0,\dots,s$ the vertices $x_{i,j}$ in
  $\mathcal{J}_s (G)$ have the same color as the vertex $x_i$ in
  $G$. We show this is a vertex coloring of $\mathcal{J}_s
  (G)$. Consider two vertices $x_{i,j}$ and $x_{k,l}$ that are
  adjacent in $\mathcal{J}_s (G)$. We need to show they have different
  colors. By Lemma \ref{lem:1}, $x_{i,j}$ and $x_{k,l}$ are adjacent
  in $\mathcal{J}_s (G)$ if and only if $x_i$ and $x_k$ are adjacent
  in $G$ and $j+l\leqslant s$. Therefore $x_{i,j}$ and $x_{k,l}$
  inherit the two different colors coming from $x_i$ and $x_j$. This
  shows that $\mathcal{J}_s (G)$ admits a vertex coloring with $c$
  different colors.
  
  To prove that $\mathcal{J}_s (G)$ has chromatic number $c$ we need
  to show that $\mathcal{J}_s (G)$ does not allow a vertex coloring
  with fewer than $c$ colors. This follows directly from the fact that
  $\mathcal{J}_s (G)$ contains $\mathcal{J}_0 (G)$ as an induced
  subgraph and $\mathcal{J}_0 (G)$ is isomorphic to $G$ which has
  chromatic number $c$.
\end{proof}

\begin{example}
  Here is an example of a coloring on the 4-cycle (Figure \ref{fig:3})
  and its jets of order 1 and 2 (Figures \ref{fig:4} and
  \ref{fig:5}). Notice that for $s \geqslant 1$ the vertices
  $a_s,b_s,c_s,d_s$ are assigned the same color as the corresponding
  vertices $a,b,c,d$ in the original graph. As shown in Theorem
  \ref{thm:2}, this assignment gives a coloring for all orders of jets
  and illustrates why jet graphs have the same chromatic number as
  their 0-jet graph.
\end{example}

\begin{figure}[htb]
  \centering
  \definecolor{wrwrwr}{rgb}{0.3803921568627451,0.3803921568627451,0.3803921568627451}
  \begin{tikzpicture}[line cap=round,line join=round,>=triangle 45,x=1cm,y=1cm]
    \clip(-11.594055585365698,4.398532333419545) rectangle (-7.794678297429547,6.70529711538077);
    \draw [line width=2pt,color=wrwrwr] (-10.88761904761904,6.40597402597402)-- (-10.871877213695388,4.705855962219594);
    \draw [line width=2pt,color=wrwrwr] (-8.730987800078702,4.705855962219594)-- (-10.871877213695388,4.705855962219594);
    \draw [line width=2pt,color=wrwrwr] (-10.88761904761904,6.40597402597402)-- (-8.746729634002354,6.40597402597402);
    \draw [line width=2pt,color=wrwrwr] (-8.746729634002354,6.40597402597402)-- (-8.730987800078702,4.705855962219594);
    \draw [fill=black] (-10.88761904761904,6.40597402597402) circle (2.5pt);
    \draw (-11.2,6.5) node {$a$};
    \draw [fill=black] (-8.730987800078702,4.705855962219594) circle (2.5pt);
    \draw (-8.4,4.65) node {$b$};
    \draw [fill=white] (-10.871877213695388,4.705855962219594) circle (2.5pt);
    \draw (-11.2,4.65) node {$c$};
    \draw [fill=white] (-8.746729634002354,6.40597402597402) circle (2.5pt);
    \draw (-8.4,6.5) node {$d$};
  \end{tikzpicture}
  \caption{Coloring of a 4-cycle}\label{fig:3}
\end{figure}
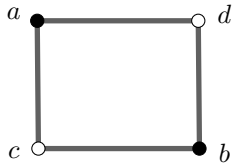

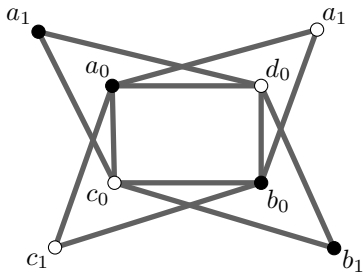
\begin{figure}[htb]
  \centering
  \definecolor{wrwrwr}{rgb}{0.3803921568627451,0.3803921568627451,0.3803921568627451}
  \hspace*{3em}\begin{tikzpicture}[line cap=round,line join=round,>=triangle 45,x=1cm,y=1cm, scale=.5]
    \clip(-11.969782761484788,-1.5) rectangle (-0.2603552522475032,5.617180029932563);
    \draw [line width=2pt,color=wrwrwr] (-8.92,3.56)-- (-8.86,0.98);
    \draw [line width=2pt,color=wrwrwr] (-4.96,0.98)-- (-8.86,0.98);
    \draw [line width=2pt,color=wrwrwr] (-4.96,3.56)-- (-8.92,3.56);
    \draw [line width=2pt,color=wrwrwr] (-4.96,3.56)-- (-4.96,0.98);
    \draw [line width=2pt,color=wrwrwr] (-8.86,0.98)-- (-10.88,5);
    \draw [line width=2pt,color=wrwrwr] (-4.96,3.56)-- (-10.88,5);
    \draw [line width=2pt,color=wrwrwr] (-8.86,0.98)-- (-3.02,-0.76);
    \draw [line width=2pt,color=wrwrwr] (-4.96,3.56)-- (-3.02,-0.76);
    \draw [line width=2pt,color=wrwrwr] (-8.92,3.56)-- (-10.44,-0.74);
    \draw [line width=2pt,color=wrwrwr] (-4.96,0.98)-- (-10.44,-0.74);
    \draw [line width=2pt,color=wrwrwr] (-8.92,3.56)-- (-3.4733079224650502,5.056188293862837);
    \draw [line width=2pt,color=wrwrwr] (-4.96,0.98)-- (-3.4733079224650502,5.056188293862837);
    \draw [fill=black] (-8.92,3.56) circle (5pt);
    \draw (-9.3,4) node {$a_0$};
    \draw [fill=black] (-4.96,0.98) circle (5pt);
    \draw (-4.5,0.5) node {$b_0$};
    \draw [fill=white] (-8.86,0.98) circle (5pt);
    \draw (-9.3,0.6) node {$c_0$};
    \draw [fill=white] (-4.96,3.56) circle (5pt);
    \draw (-4.5,4) node {$d_0$};
    \draw [fill=black] (-10.88,5) circle (5pt);
    \draw (-11.4,5.4) node {$a_1$};
    \draw [fill=white] (-10.44,-0.74) circle (5pt);
    \draw (-10.9,-1.1) node {$c_1$};
    \draw [fill=black] (-3.02,-0.76) circle (5pt);
    \draw (-2.5,-1.1) node {$b_1$};
    \draw [fill=white] (-3.4733079224650502,5.056188293862837) circle (5pt);
    \draw (-3,5.5) node {$d_1$};
  \end{tikzpicture}
  \caption{Coloring of first jets of a 4-cycle}\label{fig:4}
\end{figure}
\begin{figure}[htb]
  \centering
  \definecolor{wrwrwr}{rgb}{0.3803921568627451,0.3803921568627451,0.3803921568627451}
  \definecolor{rvwvcq}{rgb}{0.08235294117647059,0.396078431372549,0.7529411764705882}
  \definecolor{dtsfsf}{rgb}{0.8274509803921568,0.1843137254901961,0.1843137254901961}
  \hspace*{1em}\begin{tikzpicture}[line cap=round,line join=round,>=triangle 45,x=1cm,y=1cm, scale=0.5]
    \clip(-10,-5.5) rectangle (8,6.87);
    \draw [line width=2pt,color=wrwrwr] (-4.4,2.36)-- (-4.48,-0.94);
    \draw [line width=2pt,color=wrwrwr] (0.92,-0.96)-- (-4.48,-0.94);
    \draw [line width=2pt,color=wrwrwr] (-4.4,2.36)-- (0.88,2.36);
    \draw [line width=2pt,color=wrwrwr] (0.88,2.36)-- (0.92,-0.96);
    \draw [line width=2pt,color=wrwrwr] (-4.48,-0.94)-- (-6.3,3.78);
    \draw [line width=2pt,color=wrwrwr] (0.88,2.36)-- (-6.3,3.78);
    \draw [line width=2pt,color=wrwrwr] (-4.48,-0.94)-- (2.72,-2.88);
    \draw [line width=2pt,color=wrwrwr] (0.88,2.36)-- (2.72,-2.88);
    \draw [line width=2pt,color=wrwrwr] (-4.4,2.36)-- (-6.3,-2.88);
    \draw [line width=2pt,color=wrwrwr] (0.92,-0.96)-- (-6.3,-2.88);
    \draw [line width=2pt,color=wrwrwr] (-4.4,2.36)-- (2.54,3.76);
    \draw [line width=2pt,color=wrwrwr] (0.92,-0.96)-- (2.54,3.76);
    \draw [line width=2pt,color=wrwrwr] (-6.3,3.78)-- (-6.3,-2.88);
    \draw [line width=2pt,color=wrwrwr] (-6.3,-2.88)-- (2.72,-2.88);
    \draw [line width=2pt,color=wrwrwr] (-6.3,3.78)-- (2.54,3.76);
    \draw [line width=2pt,color=wrwrwr] (2.54,3.76)-- (2.72,-2.88);
    \draw [line width=2pt,color=wrwrwr] (-4.48,-0.94)-- (-8.14,5.32);
    \draw [line width=2pt,color=wrwrwr] (0.88,2.36)-- (-8.14,5.32);
    \draw [line width=2pt,color=wrwrwr] (-4.48,-0.94)-- (5.04,-4.52);
    \draw [line width=2pt,color=wrwrwr] (0.88,2.36)-- (5.04,-4.52);
    \draw [line width=2pt,color=wrwrwr] (-4.4,2.36)-- (-7.6,-4.26);
    \draw [line width=2pt,color=wrwrwr] (0.92,-0.96)-- (-7.6,-4.26);
    \draw [line width=2pt,color=wrwrwr] (-4.4,2.36)-- (4.5,5.3);
    \draw [line width=2pt,color=wrwrwr] (0.92,-0.96)-- (4.5,5.3);
    \draw [fill=black] (-4.4,2.36) circle (5pt);
    \draw (-4.76,2.96) node {$a_0$};
    \draw [fill=black] (0.92,-0.96) circle (5pt);
    \draw (1.5,-1.3) node {$b_0$};
    \draw [fill=white] (-4.48,-0.94) circle (5pt);
    \draw (-4.84,-1.25) node {$c_0$};
    \draw [fill=white] (0.88,2.36) circle (5pt);
    \draw (1.1,2.84) node {$d_0$};
    \draw [fill=black] (-6.3,3.78) circle (5pt);
    \draw (-6.66,4.36) node {$a_1$};
    \draw [fill=black] (-8.14,5.32) circle (5pt);
    \draw (-8.62,5.92) node {$a_2$};
    \draw [fill=white] (-6.3,-2.88) circle (5pt);
    \draw (-6.6,-3.3) node {$c_1$};
    \draw [fill=white] (-7.6,-4.26) circle (5pt);
    \draw (-8.2,-4.6) node {$c_2$};
    \draw [fill=black] (2.72,-2.88) circle (5pt);
    \draw (3.3,-3.1) node {$b_1$};
    \draw [fill=black] (5.04,-4.52) circle (5pt);
    \draw (5.5,-5) node {$b_2$};
    \draw [fill=white] (2.54,3.76) circle (5pt);
    \draw (2.9,4.24) node {$d_1$};
    \draw [fill=white] (4.5,5.3) circle (5pt);
    \draw (4.72,5.78) node {$d_2$};
  \end{tikzpicture}
  \caption{Coloring of second jets of a 4-cycle}\label{fig:5}
\end{figure}
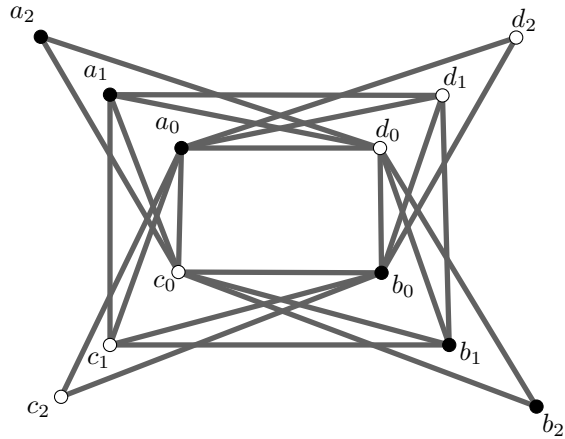

\begin{remark}
  The argument used to prove Theorem \ref{thm:2} can also be used to
  show that for an arbitrary positive integer $b$ and for every
  $s\in\mathbb{N}$ the graph $\mathcal{J}_s (G)$ has the same $b$-fold
  chromatic number as the graph $G$. It follows that
  $\mathcal{J}_s (G)$ also has the same fractional chromatic number as
  $G$. The $b$-fold chromatic number of a graph has an algebraic
  interpretation involving powers of the cover ideal of a graph (see
  \cite[Theorem 1.26]{MR3184120}), while the fractional chromatic
  number of a graph is related to the Waldschmidt constant of the edge
  ideal of the graph (see \cite[Theorem 1.1]{MR3566223}).
\end{remark}

\section{When are Jet Graphs Co-chordal?}
\label{sec:when-are-jet}

Next we look at the chordality of the complement of the jets of a
graph.  Recall that a graph is called \emph{chordal} (or
\emph{triangulated}) if it contains no induced cycle of length four or
more \cite[\S~9.7]{MR2368647}. We say a graph $G$ is \emph{co-chordal}
if its complement $G^c$ is chordal.  Co-chordal graphs are interesting
because their edge ideals are well behaved as expressed by the
following theorem.

\begin{theorem}[Fr\"oberg, \cite{MR1171260}]
  Let $G$ be a graph and $I(G)$ its edge ideal. Then $I(G)$ has a
  linear resolution if and only if $G$ is co-chordal.
\end{theorem}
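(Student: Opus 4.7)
The plan is to translate the problem into simplicial topology via Hochster's formula. The Stanley--Reisner complex of the edge ideal $I(G)$ is the independence complex $\mathrm{Ind}(G)$, and independent sets in $G$ are precisely cliques in $G^c$, so $\mathrm{Ind}(G) = \mathrm{Cl}(G^c)$ and more generally $\mathrm{Ind}(G)|_W = \mathrm{Cl}(G^c[W])$ for any $W \subseteq V(G)$. Hochster's formula then reads
\begin{equation*}
  \beta_{i,j}(I(G)) = \sum_{W \subseteq V(G),\ |W|=j} \dim_{\Bbbk} \tilde{H}_{j-i-2}\bigl(\mathrm{Cl}(G^c[W]); \Bbbk\bigr).
\end{equation*}
Since $I(G)$ is generated in degree $2$, linearity of its resolution is equivalent to $\beta_{i,j}(I(G)) = 0$ whenever $j \neq i+2$. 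The problem therefore reduces to showing that $G^c$ is chordal if and only if $\tilde{H}_k(\mathrm{Cl}(G^c[W]); \Bbbk) = 0$ for every $W \subseteq V(G)$ and every $k \geq 1$ (the vanishing for $k = -1$ is automatic whenever $W$ is nonempty).

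For the ``if'' direction assume $G^c$ is chordal. Since induced subgraphs of chordal graphs are chordal, it suffices to prove the following intrinsic topological statement: for any chordal graph $H$, $\tilde{H}_k(\mathrm{Cl}(H); \Bbbk) = 0$ for all $k \geq 1$. I would induct on $|V(H)|$. By Dirac's theorem, $H$ has a simplicial vertex $v$, i.e.\ one whose neighborhood $N(v)$ is a clique. Write $\mathrm{Cl}(H) = A \cup B$, where $A$ is the closed star of $v$ and $B = \mathrm{Cl}(H - v)$. Because $N(v) \cup \{v\}$ is a clique, $A$ is a full simplex and hence contractible; the intersection $A \cap B$ is the simplex on $N(v)$, also contractible; and $B$ satisfies the vanishing by the inductive hypothesis since $H - v$ is still chordal. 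Mayer--Vietoris then yields $\tilde{H}_k(\mathrm{Cl}(H); \Bbbk) \cong \tilde{H}_k(B; \Bbbk) = 0$ for $k \geq 1$, closing the induction.

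The converse goes by contrapositive. If $G^c$ contains an induced cycle of length $k \geq 4$ on a vertex set $W$, then $\mathrm{Cl}(G^c[W])$ is that cycle viewed as a $1$-dimensional complex (no triangles appear, by the ``induced'' condition), so $\tilde{H}_1$ is one-dimensional. By the displayed formula, $\beta_{k-3,\,k}(I(G)) \neq 0$, which breaks linearity since $k - (k-3) = 3 \neq 2$. The main obstacle I anticipate is the bookkeeping around the Mayer--Vietoris step: one must verify that the closed star of $v$ (defined as faces $\sigma$ with $\sigma \cup \{v\}$ a face of $\mathrm{Cl}(H)$, not merely those containing $v$) really equals the simplex on $N(v) \cup \{v\}$ and that $A \cap B = \mathrm{Cl}(N(v))$; once these set-theoretic identifications are secured, the long exact sequence and the vanishing in $A$ and $A \cap B$ deliver the conclusion without further work.
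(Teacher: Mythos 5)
The paper does not prove this statement; it is quoted as Fröberg's theorem with a citation to \cite{MR1171260}, so there is no in-paper argument to compare against. Your proposal is, however, a correct and essentially standard modern proof. The reduction via Hochster's formula is set up with the right indexing ($\beta_{i,j}(I(G))=\sum_{|W|=j}\dim_{\Bbbk}\tilde H_{j-i-2}$, and $\tilde H_{-1}$ is harmless because every singleton is an independent set, so the restricted complex is nonvoid for nonempty $W$), the identification $\mathrm{Ind}(G)|_W=\mathrm{Cl}(G^c[W])$ is right, and the converse via an induced $k$-cycle with $k\geqslant 4$ forcing $\tilde H_1\neq 0$ and hence $\beta_{k-3,k}\neq 0$ is clean. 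The Mayer--Vietoris step also checks out: with $v$ simplicial, $A$ is the full simplex on $N(v)\cup\{v\}$ and $A\cap B$ is the full simplex on $N(v)$, exactly as you anticipate. Two small points to tighten when writing it up: (i) handle the degenerate case $N(v)=\emptyset$ (isolated simplicial vertex), where $A\cap B=\{\emptyset\}$ is not contractible but still has $\tilde H_{k-1}=0$ for $k\geqslant 1$, so the exact sequence argument survives; (ii) you can source the existence of a simplicial vertex from the simplicial-order characterization already quoted in the paper (Theorem \ref{thm:5}) rather than invoking Dirac separately. Your route through reduced homology of clique complexes is somewhat more topological than Fröberg's original Stanley--Reisner argument, but it proves the same statement in full generality and makes the ``only if'' direction essentially immediate.
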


Note that $I(G)$ having a linear resolution is equivalent to $I(G)$
having regularity 2. For details on resolutions of monomial ideals, we
invite the reader to consult \cite{MR2110098}.

It is natural to ask whether given a graph $G$ that is co-chordal, the
jets of $G$ will also be co-chordal. We found this does not hold true
in general.

\begin{example}\label{exa:2}
  Let $P_4$ be the path of length 3, which is co-chordal. The edge
  ideal of the path is $I (P_4) = \langle ab,bc,cd \rangle$, while its
  first and second jets have the following edge ideals:
  \begin{equation*}
    \begin{split}
      I(\mathcal{J}_1 (P_4)) = \langle
      &{a}_{0}{b}_{0},{b}_{0}{c}_{0},{c}_{0}{d}_{0},{a}_{0}{b}_{1},{a}_{1}{b}_{0},
      {b}_{0}{c}_{1},{b}_{1}{c}_{0},{c}_{0}{d}_{1},{c}_{1}{d}_{0},\rangle,\\
      I(\mathcal{J}_2 (P_4)) = \langle
      &{a}_{0}{b}_{0},{b}_{0}{c}_{0},{c}_{0}{d}_{0},{a}_{0}{b}_{1},{a}_{1}{b}_{0},
      {b}_{0}{c}_{1},{b}_{1}{c}_{0},{c}_{0}{d}_{1},{c}_{1}{d}_{0},\\
      &{a}_{0}{b}_{2},{a}_{1}{b}_{1},{a}_{2}{b}_{0},
      {b}_{0}{c}_{2},{b}_{1}{c}_{1},{b}_{2}{c}_{0},
      {c}_{0}{d}_{2},{c}_{1}{d}_{1},{c}_{2}{d}_{0}\rangle.
    \end{split}
  \end{equation*}

  Figure \ref{fig:6} shows the complement of the graph
  $\mathcal{J}_1 (P_4)$. We can see from the picture this graph is not
  chordal because it contains the induced cycle of length 4
  $(a_0 c_1 b_1 d_0)$. Therefore $\mathcal{J}_1 (P_4)$ is not
  co-chordal even though $P_4$ is.
  
  Similarly, $\mathcal{J}_2 (P_4)$ is not co-chordal. We do not offer
  a picture but we observe that the complement of
  $\mathcal{J}_2 (P_4)$ contains the induced cycle of length 4
  $(a_0 c_2 b_2 d_0)$. It is easy to see from the description of
  $I(\mathcal{J}_2 (P_4))$ that none of the edges of this cycle belong
  to $\mathcal{J}_2 (P_4)$ while its two chords $\{a_0, b_2\}$ and
  $\{c_2, d_0\}$ do.
\end{example}

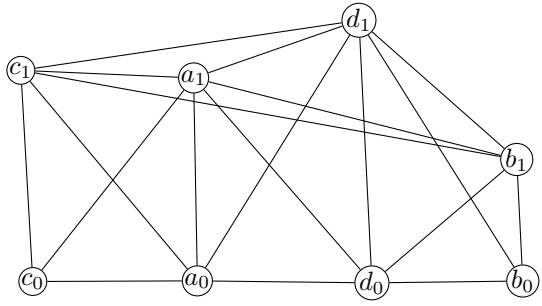
\begin{figure}[htb]
  \begin{tikzpicture}         \newcommand*\pointseRkKg{320.32992700657917/439.590748974379/0/c_0,429.6980514678469/439.17197741835355/1/a_0,312.3703167711542/299.1645165014176/2/c_1,427.20121386048976/304.20431176315776/3/a_1,537.0843396666834/265.8447924265863/4/d_1,545.7008227946919/440.5102298651267/5/d_0,641.9367337678449/358.45178039046385/6/b_1,645.9612031091742/439.27049499324835/7/b_0}          \newcommand*\edgeseRkKg{1/0,2/0,2/1,3/0,3/1,3/2,4/1,4/2,4/3,5/1,5/3,5/4,6/2,6/3,6/4,6/5,7/4,7/5,7/6}          \newcommand*\scaleeRkKg{0.02}          \foreach \x/\y/\z/\w in \pointseRkKg {          \node (\z) at (\scaleeRkKg*\x,-\scaleeRkKg*\y) [circle,draw,inner sep=0pt] {$\w$};          }          \foreach \x/\y in \edgeseRkKg {          \draw (\x) -- (\y);          }      \end{tikzpicture}     
    \caption{The Complement of $\mathcal{J}_1(P_4)$}\label{fig:6}
\end{figure}

Example \ref{exa:2} shows that jets of graphs do not necessarily
preserve the property of being co-chordal. In addition, the example
suggests it might be worth studying the co-chordality property of jets
in relation to the diameter of the original graph. Recall that the
\emph{diameter} of a connected graph is the maximum of the distances
between its vertices, where the \emph{distance} between two vertices
is defined as the number of edges in a shortest path connecting those
vertices \cite[\S~3.1]{MR2368647}. This leads to our first result on
co-chordality of jets of a graph.

\begin{theorem}\label{thm:3}
  If a graph $G$ has diameter greater than or equal to 3, then for
  every integer $s \geqslant 1$ the graph $\mathcal{J}_s(G)$ is not
  co-chordal.
\end{theorem}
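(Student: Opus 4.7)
The plan is to explicitly exhibit an induced 4-cycle in the complement $\mathcal{J}_s(G)^c$, which will show $\mathcal{J}_s(G)^c$ is not chordal, hence $\mathcal{J}_s(G)$ is not co-chordal. This mimics the structure already observed in Example \ref{exa:2}, where the induced cycle $(a_0,c_1,b_1,d_0)$ was extracted from the path $P_4$ of diameter 3.

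Since $\operatorname{diam}(G)\geqslant 3$, there exist vertices $a,b$ in $G$ with $d(a,b)=3$ (take any pair realizing distance $\geqslant 3$ and, if necessary, replace $b$ by the third vertex along a shortest $a$-$b$ path, since every subpath of a shortest path is itself shortest). Let $a-u-v-b$ be a shortest path, so $d(a,u)=d(u,v)=d(v,b)=1$ while $d(a,v)=d(u,b)=2$ and $d(a,b)=3$; in particular the pairs $\{a,v\}$, $\{u,b\}$, and $\{a,b\}$ are non-edges of $G$. The first step is then to consider the four vertices $a_0,\ v_s,\ u_s,\ b_0$ in $\mathcal{J}_s(G)$ and claim that they induce a 4-cycle $(a_0,v_s,u_s,b_0)$ in the complement $\mathcal{J}_s(G)^c$.

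To verify this, I would apply Lemma \ref{lem:1} to each of the six unordered pairs among these four vertices. The four cycle edges are non-edges of $\mathcal{J}_s(G)$: $\{a_0,v_s\}$ and $\{u_s,b_0\}$ because $\{a,v\}$ and $\{u,b\}$ are not edges of $G$; $\{b_0,a_0\}$ because $\{a,b\}$ is not an edge of $G$; and $\{v_s,u_s\}$ because although $\{u,v\}$ is an edge of $G$, we have $s+s>s$ for $s\geqslant 1$. Conversely, the two diagonals are edges of $\mathcal{J}_s(G)$: $\{a_0,u_s\}$ since $\{a,u\}$ is an edge of $G$ and $0+s\leqslant s$, and likewise $\{v_s,b_0\}$. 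Thus in $\mathcal{J}_s(G)^c$ the induced subgraph on $\{a_0,v_s,u_s,b_0\}$ consists of exactly the four cycle edges, giving an induced 4-cycle.

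The conclusion is immediate from the definition of chordality: $\mathcal{J}_s(G)^c$ contains an induced cycle of length 4, so it is not chordal, and $\mathcal{J}_s(G)$ is not co-chordal. There is no real obstacle here; the only subtle point is insisting on distance exactly 3 (rather than $\geqslant 3$) so that the intermediate vertices $u,v$ of the shortest path are guaranteed to be non-adjacent to the endpoints $b$ and $a$ respectively, and this is handled by restricting to the first three edges of a shortest path.
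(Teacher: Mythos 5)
Your proposal is correct and follows essentially the same route as the paper: both reduce to a pair of vertices at distance exactly $3$ along a shortest path $a$--$u$--$v$--$b$, and both exhibit the induced $4$-cycle on $\{a_0, v_s, u_s, b_0\}$ in $\mathcal{J}_s(G)^c$ (the paper writes it as $(a_0\, d_0\, b_s\, c_s)$ for the path $a$--$b$--$c$--$d$), verifying the four non-edges and the two chords via Lemma \ref{lem:1}.
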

\begin{proof}
  Let $G$ be a connected graph with diameter at least 3. By
  definition, there are two vertices of $G$ such that the shortest
  path between them has at least three edges. This implies that there
  are two vertices $a$ and $d$ along this path such that a shortest
  path between $a$ and $d$ has exactly three edges, say,
  $\{a,b\},\{b,c\},\{c,d\}$. Let $s\geqslant 1$ be an integer. To
  prove $\mathcal{J}_s (G)$ is not co-chordal we show
  $(a_0 d_0 b_s c_s)$ is a minimal cycle in $\mathcal{J}_s (G)^c$.

  First, we show the cycle $(a_0 d_0 b_s c_s)$ is contained in
  $\mathcal{J}_s (G)^c$. Notice that $\{a,d\}$, $\{b,d\}$, and
  $\{a,c\}$ are not edges of $G$ because if they were, then the
  distance between $a$ and $d$ would be less than 3 contradicting our
  assumption. Therefore $\{a_0,d_0\}$, $\{b_s ,d_0\}$, and
  $\{a_0,c_s\}$ are not edges of $\mathcal{J}_s (G)$ by Lemma
  \ref{lem:1}. Since $\{b,c\}$ is an edge in $G$, we cannot use the
  same reasoning to show $\{b_s,c_s\}$ is an edge in
  $\mathcal{J}_s (G)^c$. However, we can observe that the subscripts
  in $\{b_s,c_s\}$ add up to $2s>s$ (because $s\geqslant
  1$). Therefore Lemma \ref{lem:1} implies $\{b_s,c_s\}$ is not an
  edge in $\mathcal{J}_s (G)$.

  Next, we show the cycle $(a_0 d_0 b_s c_s)$ is an induced cycle of
  length four in $\mathcal{J}_s (G)^c$. It is enough to prove that the edges
  $\{a_0,b_s\}$ and $\{c_s, d_0\}$ (the only candidates for chords)
  are not in $\mathcal{J}_s (G)^c$. This follows once again from Lemma
  \ref{lem:1}: $\{a,b\}$ and $\{c,d\}$ are edges in $G$, and the
  subscripts in $\{a_0,b_s\}$ and $\{c_s, d_0\}$ add up to $s$, hence
  $\{a_0,b_s\}$ and $\{c_s, d_0\}$ are edges in $\mathcal{J}_s (G)$.

  Since $(a_0 d_0 b_s c_s)$ is an induced cycle of length 4 in
  $\mathcal{J}_s (G)^c$, we conclude that $\mathcal{J}_s (G)$ is not co-chordal.
\end{proof}

By contrast, the following result shows that graphs with diameter one
have co-chordal jets. Note that a connected graph with diameter one is
complete.

\begin{proposition}\label{pro:1}
  For all integers $n\geqslant 2$ and $s\geqslant 0$, the graphs
  $\mathcal{J}_s (K_n)$ are co-chordal.
\end{proposition}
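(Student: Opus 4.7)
The plan is to show that the complement $\mathcal{J}_s(K_n)^c$ is chordal by exhibiting a perfect elimination ordering on its vertex set, from which co-chordality of $\mathcal{J}_s(K_n)$ follows immediately.

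The first step is to describe the complement explicitly via Lemma \ref{lem:1}. Since every pair of distinct vertices of $K_n$ is an edge, $\mathcal{J}_s(K_n)^c$ has two kinds of edges: within a \emph{fiber} $F_i=\{x_{i,0},\dots,x_{i,s}\}$ every pair of distinct vertices is an edge, and between distinct fibers $F_i$ and $F_k$ the pair $\{x_{i,j},x_{k,l}\}$ is an edge of the complement iff $j+l>s$. Thus the ``level'' $j$ governs cross-fiber adjacency in the complement: vertices of high level tend to connect, vertices of low level do not.

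I then propose to order the vertices lexicographically by $(j,i)$: lowest level first, ties broken by fiber index. For $v=x_{i,j}$, the set $N_+(v)$ of later complement-neighbors splits into three types: (a) same-fiber vertices $x_{i,l}$ with $l>j$; (b) cross-fiber vertices $x_{k,l}$ with $k\neq i$ and $l>\max(j,s-j)$; (c) same-level vertices $x_{k,j}$ with $k>i$, which occur only when $2j>s$. To finish I need to verify that $N_+(v)$ is a clique in the complement for every $v$. Same-fiber pairs are automatic; for cross-fiber pairs the level bounds force $l_1+l_2>s$ by a short case analysis on which combination of types (b) and (c) is present, and mixed same-fiber/cross-fiber pairs are handled using $l_1>j$ together with $l_2>s-j$ (or $l_2=j$ with $2j>s$).

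Two things deserve attention. First, the choice of ordering matters: the ``dual'' ordering that eliminates highest levels first fails, since then a fiber vertex $x_{i,0}$ and a cross-fiber vertex $x_{k,l}$ with small $l$ can both be later complement-neighbors of some $x_{i,s}$ without being adjacent to each other. Second, the tight boundary case $s=2j$ requires integrality: there $\max(j,s-j)=s/2$ and one concludes $l_1+l_2\geq s+2>s$ from $l_1,l_2\geq j+1$, while elsewhere the inequalities have slack. These points aside, the verification is routine bookkeeping, and once $N_+(v)$ is a clique for every $v$, the ordering is a perfect elimination ordering, so $\mathcal{J}_s(K_n)^c$ is chordal and $\mathcal{J}_s(K_n)$ is co-chordal.
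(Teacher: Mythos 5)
Your proof is correct and follows essentially the same route as the paper: both order the vertices of $\mathcal{J}_s(K_n)^c$ by level first and fiber index second, and verify that this is a perfect elimination (simplicial) ordering, with the key inequality $l_1+l_2>s$ for cross-fiber pairs obtained from the level bounds exactly as in the paper's case analysis. The additional remarks about the failure of the reverse ordering and the boundary case $s=2j$ are accurate but not needed beyond what the paper already does.
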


We provide some details about our proof strategy. Recall that a
\emph{simplicial vertex} of a graph is a vertex whose neighbors induce
a complete subgraph. A \emph{simplicial order} of a graph is an
enumeration $x_1,x_2,\dots,x_n$ of its vertices such that for every
$i\in\{1,\dots,n\}$ the vertex $x_i$ is a simplicial vertex of the
graph induced on $x_i,\dots,x_n$.

\begin{example}
  Consider the graph in Figure \ref{fig:7}. The vertex $x_1$ is a
  simplicial vertex because its neighbors, $x_2$ and $x_4$, induce a
  complete subgraph. In fact, the enumeration of the vertices $x_1$,
  $x_2$, $x_3$, $x_4$ defines a simplicial order of the graph. By
  contrast, $x_2$ is not a simplicial vertex of the graph.
  
  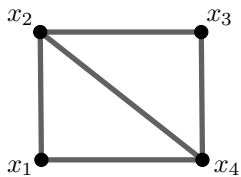
\begin{figure}[htb]
    \centering
    \definecolor{wrwrwr}{rgb}{0.3803921568627451,0.3803921568627451,0.3803921568627451}
        \begin{tikzpicture}[line cap=round,line join=round,>=triangle 45,x=1cm,y=1cm]
      \clip(-11.594055585365698,4.398532333419545) rectangle (-7.794678297429547,6.70529711538077);
      \draw [line width=2pt,color=wrwrwr] (-10.88761904761904,6.40597402597402)-- (-10.871877213695388,4.705855962219594);
      \draw [line width=2pt,color=wrwrwr] (-8.730987800078702,4.705855962219594)-- (-10.871877213695388,4.705855962219594);
      \draw [line width=2pt,color=wrwrwr] (-10.88761904761904,6.40597402597402)-- (-8.746729634002354,6.40597402597402);
      \draw [line width=2pt,color=wrwrwr] (-8.746729634002354,6.40597402597402)-- (-8.730987800078702,4.705855962219594);
      \draw [line width=2pt,color=wrwrwr] (-8.730987800078702,4.705855962219594)-- (-10.88761904761904,6.40597402597402);
      \draw[fill=black] (-10.88761904761904,6.40597402597402) circle (2.5pt);
      \draw (-11.15,6.6) node {$x_2$};
      \draw[fill=black] (-8.730987800078702,4.705855962219594) circle (2.5pt);
      \draw (-8.4,4.6) node {$x_4$};
      \draw[fill=black] (-10.871877213695388,4.705855962219594) circle (2.5pt);
      \draw (-11.15,4.6) node {$x_1$};
      \draw[fill=black] (-8.746729634002354,6.40597402597402) circle (2.5pt);
      \draw (-8.5,6.6) node {$x_3$};
    \end{tikzpicture}
    \caption{A simplicial order on a graph}\label{fig:7}
  \end{figure}

\end{example}

For more details on simplicial vertices and simplicial orders, we
invite the reader to consult \cite[\S~9.7]{MR2368647}. The following
result was first recorded in \cite[\S~7]{MR186421}, where chordal
graphs are referred to as ``rigid circuit graphs''; we are grateful to
Dr.~Robert Short for providing this reference.  The same result can
also be found in \cite[Corollary 9.22]{MR2368647}.

\begin{theorem}\label{thm:5}
  A graph is chordal if and only if it has a simplicial order.
\end{theorem}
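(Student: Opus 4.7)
The plan is to prove the two implications separately. The easier direction is that a simplicial order implies chordality, so I would dispatch it first. Suppose $G$ admits a simplicial order $x_1, x_2, \dots, x_n$ and, for contradiction, that $G$ contains an induced cycle $C=(v_{k_1}, v_{k_2}, \dots, v_{k_r})$ with $r\geqslant 4$. Let $x_i$ be the vertex of $C$ with smallest index in the simplicial order, and let $u,w$ be its two neighbors along $C$. Both $u$ and $w$ have indices strictly larger than $i$, so they belong to the induced subgraph on $\{x_i,\dots,x_n\}$, where $x_i$ is simplicial. Hence $u$ and $w$ must be adjacent in $G$, giving a chord of $C$ and contradicting the assumption that $C$ is induced.

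For the converse, I would argue by induction on the number of vertices $n$. The base case $n=1$ is trivial. For the inductive step, the key tool is the classical lemma of Dirac: every chordal graph on at least two vertices has a simplicial vertex (in fact, two non-adjacent simplicial vertices whenever the graph is not complete). Granting this, if $G$ is chordal and $v$ is a simplicial vertex, then $G-v$ is an induced subgraph of $G$ and is therefore still chordal (any induced cycle of length $\geqslant 4$ in $G-v$ would be one in $G$). By induction, $G-v$ has a simplicial order $y_1,\dots,y_{n-1}$, and prepending $v$ yields the simplicial order $v,y_1,\dots,y_{n-1}$ of $G$.

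The main obstacle is proving Dirac's lemma. I would do this by induction on $n$, handling the non-complete case. Choose a minimal vertex separator $S$ in $G$ between two non-adjacent vertices $x,y$, so that $G-S$ has distinct connected components $A$ containing $x$ and $B$ containing $y$. A short argument using chordality shows that $S$ induces a clique in $G$: if $s,s'\in S$ were non-adjacent, one could build shortest paths from $s$ to $s'$ through $A$ and through $B$ (possible by minimality of $S$), concatenate them into a cycle of length $\geqslant 4$, and observe that no chord can exist between an $A$-vertex and a $B$-vertex, yielding a chordless cycle. Now apply the induction hypothesis to the chordal subgraph induced on $A\cup S$ to obtain a simplicial vertex of that subgraph; because $S$ is a clique and $A$ is separated from the rest of $G$, this vertex, chosen from $A$, is actually simplicial in $G$. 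A symmetric argument gives a simplicial vertex in $B$, producing the two non-adjacent simplicial vertices required and completing the proof.
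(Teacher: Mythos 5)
Your proposal is essentially correct, but it is worth noting that the paper does not prove this statement at all: Theorem \ref{thm:5} is quoted from the literature, with references to Fulkerson--Gross and to \cite[Corollary 9.22]{MR2368647}. What you have written out is the standard textbook proof of that classical result. The easy direction (simplicial order $\Rightarrow$ chordal) is handled correctly: picking the cycle vertex that comes earliest in the order forces its two cycle-neighbors to be adjacent, producing a chord. The converse correctly reduces to Dirac's lemma that every non-complete chordal graph has two non-adjacent simplicial vertices, and your sketch of that lemma (minimal separator $S$ is a clique, else two non-adjacent vertices of $S$ joined by shortest paths through the two components $A$ and $B$ yield a chordless cycle of length at least $4$; then induct on $G[A\cup S]$ and $G[B\cup S]$) is the standard argument. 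The one place you compress slightly is the claim that the simplicial vertex of $G[A\cup S]$ can be ``chosen from $A$'': this is exactly where the strengthened induction hypothesis is needed --- of the two non-adjacent simplicial vertices guaranteed in the non-complete case, at most one can lie in the clique $S$, so one lies in $A$; and in the complete case every vertex of $A$ works. You state the strengthened hypothesis, so the argument goes through, but that deduction deserves to be made explicit. Since all neighbors of a vertex of $A$ lie in $A\cup S$, being simplicial in $G[A\cup S]$ then implies being simplicial in $G$, completing the induction.
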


We will use this characterization of chordal graphs throughout the
rest of this section.

\begin{proof}[Proof of Proposition \ref{pro:1}]
  Let $\{x_1,\dots,x_n\}$ be the set of vertices of $K_n$ and fix
  $s\geqslant 0$. The graph $\mathcal{J}_s (K_n)^c$ has set of vertices
  \begin{equation*}
    \{x_{i,t} \,|\, i=1,\dots,n, \, t=0,\dots,s\}.
  \end{equation*}
  The edges of $\mathcal{J}_s (K_n)^c$ fall into one of the following types:
  \begin{enumerate}[label=(\roman*)]
  \item\label{item:5} $\{x_{i,t}, x_{j,u}\}$ with $i\neq j$ and
    $t+u>s$ (by Lemma \ref{lem:1});
  \item\label{item:6} or $\{x_{i,t}, x_{i,u}\}$ with $t\neq u$.
  \end{enumerate}

  We order the vertices of $\mathcal{J}_s (K_n)^c$ as follows: we declare
  $x_{i,t} < x_{j,u}$ if and only if $t<u$, or $t=u$ and $i<j$. Note
  that this is a total order. We will show that ordering vertices this
  way defines a simplicial order on $\mathcal{J}_s (K_n)^c$.

  Fix a vertex $x_{i,t}$. We need to show $x_{i,t}$ is a simplicial
  vertex for the graph $G$ induced by $\mathcal{J}_s (K_n)^c$ on the vertices
  greater than or equal to $x_{i,t}$ in the order above. Let $x_{j,u}$
  and $x_{k,v}$ be distinct neighbors of $x_{i,t}$ in $G$; our goal is
  to show they are adjacent.
  \begin{description}[wide]
  \item[Case 1] If $j=k$, then $u\neq v$ and $\{x_{j,u}, x_{k,v}\}$ is
    an edge of type \ref{item:6}.
  \item[Case 2] If $j\neq k$ and $i\neq k$, then $t+v>s$ because
    $x_{k,v}$ is a neighbor of $x_{i,t}$.  Now $x_{i,t} < x_{j,u}$
    implies $t\leqslant u$. Therefore $u+v>s$ and
    $\{x_{j,u}, x_{k,v}\}$ is an edge of type \ref{item:5}.
  \item[Case 3] If $j\neq k$ and $i\neq j$, then the proof follows the
    same reasoning as the previous case.
  \end{description}

  This proves that $\mathcal{J}_s (K_n)^c$ has a simplicial order,
  therefore it is chordal by Theorem \ref{thm:5}.
\end{proof}

It remains to discuss the situation for graphs with diameter equal to
2. As we will show, the results depend on the graph.
\begin{example}\label{exa:3}
  The graph in Figure \ref{fig:8} has diameter 2 and is co-chordal, as
  seen through its complement.
  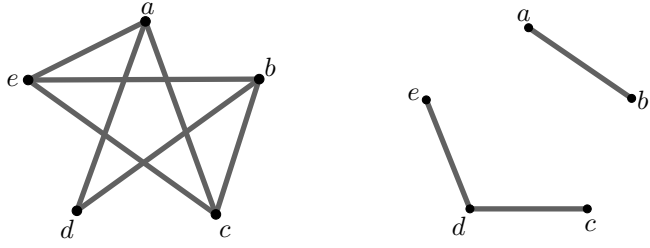
\begin{figure}[htb]
    \centering
    \definecolor{wrwrwr}{rgb}{0.3803921568627451,0.3803921568627451,0.3803921568627451}
    \definecolor{rvwvcq}{rgb}{0.08235294117647059,0.396078431372549,0.7529411764705882}
    \begin{tikzpicture}[line cap=round,line join=round,>=triangle 45,x=1cm,y=1cm,scale=.6]
      \clip(-6.447074355553356,0.3) rectangle (1.549955425144903,5.8);
      \draw [line width=2pt,color=wrwrwr] (-2.76,5.3)-- (-1.2,1.02);
      \draw [line width=2pt,color=wrwrwr] (-1.2,1.02)-- (-0.24,4.02);
      \draw [line width=2pt,color=wrwrwr] (-2.76,5.3)-- (-4.28,1.1);
      \draw [line width=2pt,color=wrwrwr] (-4.28,1.1)-- (-0.24,4.02);
      \draw [line width=2pt,color=wrwrwr] (-2.76,5.3)-- (-5.36,4);
      \draw [line width=2pt,color=wrwrwr] (-5.36,4)-- (-0.24,4.02);
      \draw [line width=2pt,color=wrwrwr] (-1.2,1.02)-- (-5.36,4);
      \draw [fill=black] (-2.76,5.3) circle (3pt);
      \draw (-2.7063034563939787,5.6) node {$a$};
      \draw [fill=black] (-0.24,4.02) circle (3pt);
      \draw (0,4.3) node {$b$};
      \draw [fill=black] (-1.2,1.02) circle (3pt);
      \draw (-1,0.65) node {$c$};
      \draw [fill=black] (-4.28,1.1) circle (3pt);
      \draw (-4.5,0.7) node {$d$};
      \draw [fill=black] (-5.36,4) circle (3pt);
      \draw (-5.7,4) node {$e$};
    \end{tikzpicture}
    \begin{tikzpicture}[line cap=round,line join=round,>=triangle 45,x=1cm,y=1cm,scale=.5]
      \clip(-6.221204015276286,-0.5694740407533441) rectangle (4.423418287875726,5.893332357588951);
      \draw [line width=2pt,color=wrwrwr] (-3,0.44)-- (-4.16,3.34);
      \draw [line width=2pt,color=wrwrwr] (-1.44,5.26)-- (1.3,3.38);
      \draw [line width=2pt,color=wrwrwr] (0.12,0.44)-- (-3,0.44);
      \draw [fill=black] (-1.44,5.26) circle (3pt);
      \draw (-1.5664998374171315,5.6) node {$a$};
      \draw [fill=black] (1.3,3.38) circle (3pt);
      \draw (1.6,3.3388084512140556) node {$b$};
      \draw [fill=black] (0.12,0.44) circle (3pt);
      \draw (0.2,0) node {$c$};
      \draw [fill=black] (-3,0.44) circle (3pt);
      \draw (-3.3,0) node {$d$};
      \draw [fill=black] (-4.16,3.34) circle (3pt);
      \draw (-4.5,3.5) node {$e$};
    \end{tikzpicture}
    \caption{A graph with diameter 2 and its complement}\label{fig:8}
  \end{figure}

  The first jets of the graph are also co-chordal. To see this, first
  compute the generators of the edge ideal.
  \begin{equation*}
    \begin{split}
      I(\mathcal{J}_1 (G)) = \langle
      &c_0e_1, b_0e_1, a_0e_1, b_0d_1, a_0d_1, e_0c_1, b_0c_1, a_0c_1,\\
      &e_0b_1, d_0b_1, c_0b_1, e_0a_1, d_0a_1, c_0a_1, c_0e_0, b_0e_0,\\
      &a_0e_0, b_0d_0, a_0d_0, b_0c_0, a_0c_0 \rangle
    \end{split}
  \end{equation*}
  Then observe that $a_0,b_0,c_0,e_0,d_0,a_1,b_1,c_1,d_1,e_1$ is a
  simplicial order of $\mathcal{J}_1 (G)$.
  
  By contrast, the second jets of the graph are not co-chordal.  It is
  possible to see from the generators of the edge ideal of the second
  jets that $(d_0 c_1 a_2 e_1)$ is an induced cycle of length four in
  the complement.
  \begin{equation*}
    \begin{split}
      I(\mathcal{J}_2(G)) = \langle
      &c_0e_2, b_0e_2, a_0e_2, b_0d_2, a_0d_2, e_0c_2, b_0c_2, a_0c_2, e_0b_2,\\
      &d_0b_2, c_0b_2, e_0a_2, d_0a_2, c_0a_2, c_1e_1, b_1e_1, a_1,e_1, c_0e_1,\\
      &b_0e_1, a_0e_1, b_1d_1, a_1d_1, b_0d_1, a_0d_1, b_1c_1, a_1c_1, e_0c_1,\\
      &b_0c_1, a_0c_1, e_0b_1, d_0b_1, c_0b_1, e_0a_1, d_0a_1, c_0a_1, c_0e_0,\\
      &b_0e_0, a_0e_0, b_0d_0, a_0d_0, b_0c_0, a_0c_0\rangle
    \end{split}
  \end{equation*}
\end{example}

For our next result, we consider the complete bipartite graphs
$K_{1,n}$, also known as stars \cite[\S 1.1, page 4]{MR2368647}. We
label the vertices $x_0,x_1,\dots,x_n$, with the bipartition given by
the subsets $\{x_0\}$ and $\{x_1,\dots,x_n\}$. In other words, $x_0$
is the ``internal'' vertex of the star (see Figure \ref{fig:9}).

\begin{figure}[htb]
  \definecolor{wrwrwr}{rgb}{0.3803921568627451,0.3803921568627451,0.3803921568627451}
  \hspace*{5em}\begin{tikzpicture}[line cap=round,line join=round,>=triangle 45,x=1cm,y=1cm,scale=.4]
    \clip(-8.225214124718262,-4.2330202854996175) rectangle (9.024973703981958,6.5);
    \draw [line width=2pt,color=wrwrwr] (-1.72,1.05)-- (-1.8509090909090922,5.7318181818181815);
    \draw [line width=2pt,color=wrwrwr] (-1.72,1.05)-- (2.92,3.21);
    \draw [line width=2pt,color=wrwrwr] (-1.72,1.05)-- (1.84,-3.25);
    \draw [line width=2pt,color=wrwrwr] (-1.72,1.05)-- (-5.38,-3.29);
    \draw [line width=2pt,color=wrwrwr] (-1.72,1.05)-- (-6.832727272727274,2.8045454545454542);
    \draw [fill=black] (-1.72,1.05) circle (4pt);
    \draw (-1.7188279489106002,0) node {$x_0$};
    \draw [fill=black] (-1.8509090909090922,5.7318181818181815) circle (4pt);
    \draw (-1.6887753568745372,6.15) node {$x_1$};
    \draw [fill=black] (2.92,3.21) circle (4pt);
    \draw (3.314981217129969,3.65) node {$x_2$};
    \draw [fill=black] (1.84,-3.25) circle (4pt);
    \draw (2.789060856498865,-3.158640120210362) node {$x_3$};
    \draw [fill=black] (-5.38,-3.29) circle (4pt);
    \draw (-6.286821938392191,-3.2938767843726455) node {$x_4$};
    \draw [fill=black] (-6.832727272727274,2.8045454545454542) circle (4pt);
    \draw (-7.473899323816684,3.1824567993989508) node {$x_5$};
  \end{tikzpicture}
  \caption{Star Graph $K_{1,5}$}
  \label{fig:9}
\end{figure}
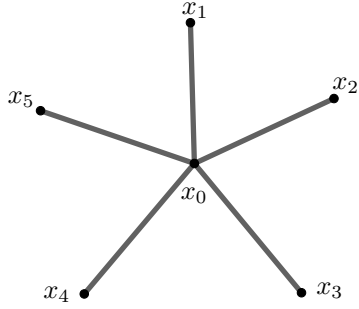

\begin{proposition}\label{pro:2}
  For all integers $n\geqslant 2$ and $s\geqslant 0$, the graphs
  $\mathcal{J}_s (K_{1,n})$ are co-chordal.
\end{proposition}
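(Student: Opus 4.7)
The plan is to mirror the strategy used for Proposition \ref{pro:1}: exhibit a simplicial order on $\mathcal{J}_s(K_{1,n})^c$ and apply Theorem \ref{thm:5}. Writing the vertices of $K_{1,n}$ as $x_0$ (the center) and $x_1,\ldots,x_n$ (the leaves), Lemma \ref{lem:1} tells me the edges of $\mathcal{J}_s(K_{1,n})^c$ fall into three types: (i) $\{x_{0,j},x_{i,k}\}$ with $i\neq 0$ and $j+k>s$; (ii) $\{x_{i,j},x_{i,k}\}$ with $j\neq k$ (any $i$); and (iii) $\{x_{i,j},x_{i',k}\}$ with $i\neq i'$ and both $i,i'\neq 0$, since leaves are never adjacent in $K_{1,n}$ so no such pair can be an edge of $\mathcal{J}_s(K_{1,n})$.

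The key observation is that the leaf-vertices $\{x_{i,k}:i\neq 0\}$ already form a clique in $\mathcal{J}_s(K_{1,n})^c$ by types (ii) and (iii); the only subtlety is how the central vertices $x_{0,j}$ interact with them. I would therefore propose the order $x_{0,0}<x_{0,1}<\cdots<x_{0,s}<$ (remaining vertices in any order), eliminating the central vertices first, in increasing order of second subscript, before cleaning up the leaf-clique.

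The main step is verifying that $x_{0,j}$ is simplicial in the subgraph $H_j$ induced on the vertices $\geq x_{0,j}$. Its neighbors in $H_j$ are $\{x_{0,k}:k>j\}\cup\{x_{i,k}:i\neq 0,\,k\geq s-j+1\}$. Two neighbors both of the form $x_{0,k}$ are adjacent by type (ii); two of the form $x_{i,k}$, $x_{i',k'}$ with $i,i'\neq 0$ are adjacent by type (ii) if $i=i'$ and type (iii) if $i\neq i'$; and a mixed pair $x_{0,k}$, $x_{i,k'}$ satisfies $k+k'\geq (j+1)+(s-j+1)=s+2>s$, so is an edge of type (i). After removing all the $x_{0,j}$'s the leftover clique finishes the simplicial order, and Theorem \ref{thm:5} delivers chordality of the complement.

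The main obstacle is locating the right order. The naive analogue of the ordering used in Proposition \ref{pro:1} (processing $x_{0,s}$ first) fails: for $s\geq 2$ the vertex $x_{0,s}$ has neighbors $x_{0,0}$ and $x_{i,1}$ whose subscripts sum to only $1\not>s$, so they are not adjacent in the complement. Once one notices that the correct heuristic is to strip off central vertices with the smallest subscripts first — so that the surviving central vertices have large enough subscripts to trigger type (i) edges with the surviving leaves — the inequality $k+k'\geq s+2$ makes everything fit together.
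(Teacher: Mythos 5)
Your proof is correct and follows essentially the same strategy as the paper: both exhibit a simplicial order on $\mathcal{J}_s(K_{1,n})^c$ and conclude via Theorem \ref{thm:5}. The paper orders all vertices by increasing jet subscript (breaking ties by vertex index), whereas you peel off the central vertices $x_{0,0},\dots,x_{0,s}$ first and finish with the leaf clique; both orders work, and the decisive inequality (subscript sums exceeding $s$ for the surviving central--leaf pairs) is the same in each.
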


\begin{proof}
  Let $\{x_0, x_1,\dots,x_n\}$ be the set of vertices of $K_{1,n}$,
  partitioned as $\{x_0\}$ and $\{x_1,\dots,x_n\}$ (i.e., $x_0$ is the
  ``internal'' vertex of the star).  Fix $s\geqslant 0$. The graph
  $\mathcal{J}_s (K_{1,n})^c$ has set of vertices
  \begin{equation*}
    \{x_{i,t} \,|\, i=0,\dots,n, \, t=0,\dots,s\}.
  \end{equation*}
  The edges of $\mathcal{J}_s (K_{1,n})^c$ fall into one of the
  following types:
  \begin{enumerate}[label=(\roman*)]
  \item\label{item:7} $\{x_{0,t}, x_{j,u}\}$ with $j\neq 0$ and
    $t+u>s$ (by Lemma \ref{lem:1});
  \item\label{item:8} $\{x_{i,t}, x_{i,u}\}$ with $t\neq u$;
  \item\label{item:9} or $\{x_{i,t}, x_{j,u}\}$ with $i\neq 0$,
    $j\neq 0$, and $i\neq j$.
  \end{enumerate}

  We order the vertices of $\mathcal{J}_s (K_{1,n})^c$ as follows: we
  declare $x_{i,t} < x_{j,u}$ if and only if $t<u$, or $t=u$ and
  $i<j$. Note that this is a total order. We will show that ordering
  vertices this way defines a simplicial order on
  $\mathcal{J}_s (K_{1,n})^c$.

  Fix a vertex $x_{i,t}$. We need to show $x_{i,t}$ is a simplicial
  vertex for the graph $G$ induced by $\mathcal{J}_s (K_{1,n})^c$ on
  the vertices greater than or equal to $x_{i,t}$ in the order
  above. Let $x_{j,u}$ and $x_{k,v}$ be distinct neighbors of
  $x_{i,t}$ in $G$; our goal is to show they are adjacent.
  \begin{description}[wide]
  \item[Case 1] If $j=k$, then $u\neq v$ and $\{x_{j,u}, x_{k,v}\}$ is
    an edge of type \ref{item:8}.
  \item[Case 2] If $j\neq k$, with $j\neq 0$ and $k\neq 0$, then
    $\{x_{j,u}, x_{k,v}\}$ is an edge of type \ref{item:9}.
  \item[Case 3] If $j\neq k$, with $j=0$, then $k\neq 0$ and we can
    consider two different scenarios.
    \begin{itemize}
    \item If $i=0$, then $t+v>s$ because $x_{k,v}$ is a neighbor of
      $x_{i,t}$. However, $x_{i,t} < x_{j,u}$ implies $t\leqslant u$.
    \item If $i\neq 0$, then $t+u>s$ because $x_{j,u}$ is a neighbor
      of $x_{i,t}$. However, $x_{i,t} < x_{k,v}$ implies
      $t\leqslant v$.
    \end{itemize}
    Either way, $u+v>s$ and $\{x_{j,u}, x_{k,v}\}$ is an edge of type
    \ref{item:9}.
  \item[Case 4] If $j\neq k$, with $k=0$, then $j\neq 0$ and the proof
    follows the same reasoning as the previous case.
  \end{description}

  This proves that $\mathcal{J}_s (K_{1,n})^c$ has a simplicial order,
  therefore it is chordal by Theorem \ref{thm:5}.
\end{proof}

\section{Vertex Covers of Jet Graphs}
\label{sec:vertex-covers-jet}

Recall that a subset $C$ of the set of vertices of a graph $G$ is
called a \emph{vertex cover} if every edge of $G$ has an endpoint in
$C$. In addition, $C$ is a \emph{minimal} vertex cover if no proper
subset of $C$ is a vertex cover. From the point of view of
combinatorial commutative algebra, minimal vertex covers of a graph
correspond to minimal generators of the cover ideal of the graph and
to components in the irreducible decomposition of the edge ideal of
the graph. For more details, see \cite[\S 1]{MR3184120}.

We are interested in describing minimal vertex covers of the jets of a
graph and how they relate to vertex covers of the original graph.

\begin{example}\label{exa:4}
  Consider the complete graph $K_3$ with vertex set $\{a,b,c\}$ (see
  Figure \ref{fig:2} for a picture of $K_3$ and its first jets). The
  minimal vertex covers of $K_3$ are $\{a,b\}$, $\{a,c\}$, and
  $\{b,c\}$. Using the Macaulay2 \cite{M2} package \texttt{EdgeIdeals}
  \cite{MR2878668,EdgeIdealsSource}, we found the following
  information. The minimal vertex covers of $\mathcal{J}_1 (K_3)$ are:
  \begin{equation*}
    \{a_0,b_0,c_0\}, \{a_0,b_0,a_1,b_1\}, \{a_0,c_0,a_1,c_1\}, \{b_0,c_0,b_1,c_1\}.
  \end{equation*}
  The minimal vertex covers of $\mathcal{J}_2 (K_3)$ are:
  \begin{equation*}
    \begin{split}
      &\{a_0,b_0,c_0,a_1,b_1\}, \{a_0,b_0,c_0,a_1,c_1\}, \{a_0,b_0,c_0,b_1,c_1\},\\
      &\{a_0,b_0,a_1,b_1,a_2,b_2\}, \{a_0,c_0,a_1,c_1,a_2,c_2\}, \{b_0,c_0,b_1,c_1,b_2,c_2\}.
    \end{split}
  \end{equation*}
  The minimal vertex covers of $\mathcal{J}_3 (K_3)$ are:
  \begin{equation*}
    \begin{split}
      &\{a_0,b_0,c_0,a_1,b_1,c_1\}, \{a_0,b_0,c_0,a_1,b_1,a_2,b_2\}, \{a_0,b_0,c_0,a_1,c_1,a_2,c_2\},\\
      &\{a_0,b_0,c_0,b_1,c_1,b_2,c_2\}, \{a_0,b_0,a_1,b_1,a_2,b_2,a_3,b_3\},\\
      &\{a_0,c_0,a_1,c_1,a_2,c_2,a_3,c_3\}, \{b_0,c_0,b_1,c_1,b_2,c_2,b_3,c_3\}.
    \end{split}
  \end{equation*}
\end{example}

The next two results show how to construct certain minimal vertex
covers of jet graphs. Both results are illustrated by Example
\ref{exa:4}. The same example also shows not all minimal vertex covers
of jet graphs arise from the next two results.

\begin{proposition}\label{pro:3}
  Let $G$ be a graph with vertex set $\{x_1,\dots,x_n\}$.
  \begin{enumerate}
  \item For every $s\in \mathbb{N}$, the set
    \begin{equation*}
      \{x_{1,t},\dots,x_{n,t} \,|\, t=0,\dots,s\}
    \end{equation*}
    is a minimal vertex cover of $\mathcal{J}_{2s+1} (G)$.
  \item If $\{x_{i_1},\dots,x_{i_c}\}$ is a minimal vertex cover of
    $G$, then, for every $s\in \mathbb{N}$, the set
    \begin{equation*}
      \{x_{1,t},\dots,x_{n,t} \,|\, t=0,\dots,s-1\}
      \cup \{x_{i_1,s},\dots,x_{i_c,s}\}
    \end{equation*}
    is a minimal vertex cover of $\mathcal{J}_{2s} (G)$.
  \end{enumerate}
\end{proposition}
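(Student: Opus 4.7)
The plan is to apply Lemma \ref{lem:1} throughout and verify, for each of the two proposed sets $C$, that (a) every edge of the relevant jet graph has an endpoint in $C$, and (b) for every $v \in C$ there is an edge of the jet graph whose only endpoint in $C$ is $v$. Since $G$ is connected, every vertex of $G$ has at least one neighbor, a fact I will use repeatedly.

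For part (1), let $C_1 = \{x_{i,t} \mid 1 \le i \le n,\ 0 \le t \le s\}$. An edge $\{x_{i,u}, x_{k,v}\}$ of $\mathcal{J}_{2s+1}(G)$ satisfies $u+v \le 2s+1$, so $\min(u,v) \le s$ and the corresponding endpoint lies in $C_1$. For minimality, given $x_{i,t} \in C_1$, pick any neighbor $x_j$ of $x_i$ in $G$ and consider $\{x_{i,t}, x_{j,2s+1-t}\}$; the subscripts sum to exactly $2s+1$, so this is an edge of $\mathcal{J}_{2s+1}(G)$ by Lemma \ref{lem:1}, and $2s+1-t \ge s+1$ forces $x_{j,2s+1-t} \notin C_1$, so removing $x_{i,t}$ uncovers this edge.

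For part (2), set $C_2 = \{x_{i,t} \mid 1 \le i \le n,\ 0 \le t \le s-1\} \cup \{x_{i_1,s}, \dots, x_{i_c,s}\}$. For an edge $\{x_{i,u}, x_{k,v}\}$ of $\mathcal{J}_{2s}(G)$, either $\min(u,v) \le s-1$, placing the smaller endpoint into the first piece of $C_2$, or $u=v=s$; in the latter case $\{x_i,x_k\}$ is an edge of $G$, hence one of $x_i, x_k$ lies in $\{x_{i_1},\dots,x_{i_c}\}$ and the corresponding lift lies in the second piece of $C_2$. For minimality, vertices $x_{i,t}$ with $t \le s-1$ are handled exactly as in part (1): pair them with $x_{j,2s-t}$ using a neighbor $x_j$ of $x_i$ in $G$, and note $2s-t \ge s+1$ so the partner avoids both pieces of $C_2$.

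The main subtlety is the remaining minimality case: showing that each $x_{i_j,s}$ is necessary. Here I would exploit \emph{minimality} of $\{x_{i_1},\dots,x_{i_c}\}$ in $G$ to extract a neighbor $x_k$ of $x_{i_j}$ with $x_k \notin \{x_{i_1},\dots,x_{i_c}\}$; then $\{x_{i_j,s}, x_{k,s}\}$ is an edge of $\mathcal{J}_{2s}(G)$ whose second endpoint fails the first piece of $C_2$ (subscript is $s$, not $\le s-1$) and fails the second piece (index is not among the $i_j$). Everything else is bookkeeping with the subscript-sum criterion of Lemma \ref{lem:1}, so this translation of minimality from $G$ to $\mathcal{J}_{2s}(G)$ is really the only step with content.
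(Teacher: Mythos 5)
Your proposal is correct and follows essentially the same route as the paper: the covering property via the observation that the subscripts of an edge cannot both exceed $s$ (resp.\ both exceed $s-1$ unless they both equal $s$), and minimality via the edges $\{x_{i,t},x_{j,2s+1-t}\}$ (resp.\ $\{x_{i,t},x_{j,2s-t}\}$ and the ``private edge'' $\{x_{i_p},x_k\}$ with $x_k$ outside the cover that minimality in $G$ provides). The one step you flag as the only one with content --- extracting a neighbor of $x_{i_p}$ outside $\{x_{i_1},\dots,x_{i_c}\}$ and lifting it to level $s$ --- is exactly the paper's argument as well.
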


\begin{proof}
  \begin{enumerate}[wide]
  \item\label{item:10} Fix $s\in\mathbb{N}$. By Lemma \ref{lem:1}, an edge of
    $\mathcal{J}_{2s+1} (G)$ has the form $\{x_{j,u}, x_{k,v}\}$ where
    $\{x_j,x_k\}$ is an edge of $G$ and $u+v\leqslant 2s+1$. It
    follows that either $u\leqslant s$ or $v\leqslant s$. Hence either
    $x_{j,u}$ or $x_{k,v}$ belongs to the set
    \begin{equation*}
      C = \{x_{1,t},\dots,x_{n,t} \,|\, t=0,\dots,s\}.
    \end{equation*}
    This shows $C$ is a vertex cover of $\mathcal{J}_{2s+1} (G)$.

    Next we show $C$ is a minimal vertex cover.  Fix indices
    $i\in\{1,\dots,n\}$ and $t\in\{0,\dots,s\}$. We show that
    $C\setminus \{x_{i,t}\}$ is not a vertex cover. Since $G$ is
    connected, there is an edge $\{x_i,x_j\}$ in $G$ for some
    $j\in \{1,\dots,n\}$ with $j\neq i$. Then the set
    $\{x_{i,t},x_{j,2s+1-t}\}$ is an edge of $\mathcal{J}_{2s+1} (G)$
    and neither of its endpoints belongs to $C\setminus \{x_{i,t}\}$
    because $t\leqslant s$ implies $2s+1-t\geqslant s+1$. Therefore
    $C$ is a minimal vertex cover.
  \item Fix $s\in\mathbb{N}$. By Lemma \ref{lem:1}, an edge of
    $\mathcal{J}_{2s} (G)$ has the form $\{x_{j,u}, x_{k,v}\}$ where
    $\{x_j,x_k\}$ is an edge of $G$ and $u+v\leqslant 2s$. If either
    $u\leqslant s-1$ or $v\leqslant s-1$, the same argument used in
    part \ref{item:10} shows that $\{x_{j,u}, x_{k,v}\}$ has an
    endpoint belonging to the set
    \begin{equation*}
      C = \{x_{1,t},\dots,x_{n,t} \,|\, t=0,\dots,s-1\}
      \cup \{x_{i_1,s},\dots,x_{i_c,s}\}.
    \end{equation*}
    Otherwise, $u=v=s$. In this case, we can use the fact that
    $\{x_{i_1},\dots,x_{i_c}\}$ is a minimal vertex cover of $G$ to
    deduce that either $j$ or $k$ is in $\{i_1,\dots,i_c\}$. It
    follows that either $x_{j,u}$ or $x_{k,v}$ is in
    $\{x_{i_1,s},\dots,x_{i_c,s}\}$, and hence in $C$.  This shows $C$
    is a vertex cover of $\mathcal{J}_{2s} (G)$.

    Next we show $C$ is a minimal vertex cover.  The same argument
    used in part \ref{item:10} shows that, for $i\in\{1,\dots,n\}$ and
    $t\in\{0,\dots,s-1\}$, $C\setminus \{x_{i,t}\}$ is not a vertex
    cover. Now fix an index $p\in\{1,\dots,c\}$. We show that
    $C\setminus \{x_{i_p,s}\}$ is not a vertex cover.  Since
    $\{x_{i_1},\dots,x_{i_c}\}$ is a minimal vertex cover of $G$,
    there is an edge $\{x_j,x_k\}$ whose endpoints are not contained
    in the set $\{x_{i_1},\dots,x_{i_c}\} \setminus \{x_{i_p}\}$. Note
    however that one endpoint of $\{x_j,x_k\}$ is contained in
    $\{x_{i_1},\dots,x_{i_c}\}$. If we assume, without loss of
    generality, that endpoint is $x_j$, then we have $j=i_p$ and
    $k\notin \{i_1,\dots,i_c\}$. Now $\{x_{j,s},x_{k,s}\}$ is an edge
    of $\mathcal{J}_{2s} (G)$ whose endpoints are not contained in
    $C\setminus \{x_{i_p,s}\}$. Therefore $C$ is a minimal vertex
    cover.
  \end{enumerate}
\end{proof}

\begin{proposition}\label{pro:4}
  Let $G$ be a graph with vertex set $\{x_1,\dots,x_n\}$. If
  $\{x_{i_1},\dots,x_{i_c}\}$ is a minimal vertex cover of $G$, then,
  for every $s\in \mathbb{N}$, the set
  \begin{equation*}
    \{x_{i_1,t},\dots,x_{i_c,t} \,|\, t=0,\dots,s\}
  \end{equation*}
  is a minimal vertex cover of $\mathcal{J}_s (G)$.
\end{proposition}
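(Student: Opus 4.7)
The plan is to mirror the two-part structure used in the proof of Proposition \ref{pro:3}: first verify that the prescribed set $C=\{x_{i_1,t},\dots,x_{i_c,t}\mid t=0,\dots,s\}$ covers every edge of $\mathcal{J}_s(G)$, then verify that removing any single element of $C$ destroys the covering property.

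For the covering step, I would take an arbitrary edge $\{x_{j,u},x_{k,v}\}$ of $\mathcal{J}_s(G)$. By Lemma \ref{lem:1}, $\{x_j,x_k\}$ is an edge of $G$, so by hypothesis at least one of $j,k$ lies in $\{i_1,\dots,i_c\}$; say $j=i_p$. Then $x_{j,u}=x_{i_p,u}$ is in $C$ because $u\leqslant s$. Thus every edge of $\mathcal{J}_s(G)$ has an endpoint in $C$.

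For minimality, I would fix an arbitrary element $x_{i_p,t}\in C$ and exhibit an edge of $\mathcal{J}_s(G)$ whose two endpoints both lie outside $C\setminus\{x_{i_p,t}\}$. The key input is the minimality of $\{x_{i_1},\dots,x_{i_c}\}$ as a cover of $G$: removing $x_{i_p}$ uncovers some edge $\{x_j,x_k\}$ of $G$, and because the full set $\{x_{i_1},\dots,x_{i_c}\}$ does cover this edge, exactly one of $j,k$ must equal $i_p$ and the other must lie outside $\{i_1,\dots,i_c\}$. Without loss of generality write $j=i_p$ and $k\notin\{i_1,\dots,i_c\}$. Then the pair $\{x_{i_p,t},x_{k,0}\}$ has subscripts summing to $t\leqslant s$, so by Lemma \ref{lem:1} it is an edge of $\mathcal{J}_s(G)$. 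Its first endpoint is precisely the removed vertex $x_{i_p,t}$, while its second endpoint $x_{k,0}$ misses $C$ entirely because $k\notin\{i_1,\dots,i_c\}$. Hence $C\setminus\{x_{i_p,t}\}$ fails to cover this edge, and minimality follows.

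There is no real obstacle here; the only small point that deserves attention is the choice of second subscript $0$ in the witnessing edge $\{x_{i_p,t},x_{k,0}\}$, which is dictated by the need to keep the sum of subscripts at most $s$ regardless of how large $t$ is, and by the fact that $x_{k,0}$ is a legitimate vertex of $\mathcal{J}_s(G)$ outside $C$. Everything else reduces to the translation between edges of $\mathcal{J}_s(G)$ and edges of $G$ provided by Lemma \ref{lem:1}.
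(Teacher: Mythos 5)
Your proposal is correct and follows essentially the same two-step argument as the paper, including the same witnessing edge $\{x_{i_p,t},x_{k,0}\}$ obtained from an edge of $G$ left uncovered by $\{x_{i_1},\dots,x_{i_c}\}\setminus\{x_{i_p}\}$. No discrepancies to report.
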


\begin{proof}
  Fix $s\in\mathbb{N}$. By Lemma \ref{lem:1}, an edge of
  $\mathcal{J}_s (G)$ has the form $\{x_{j,u}, x_{k,v}\}$ where
  $\{x_j,x_k\}$ is an edge of $G$ and $u+v\leqslant s$. Since
  $\{x_{i_1},\dots,x_{i_c}\}$ is a vertex cover of $G$, there is an
  index $p\in\{1,\dots,c\}$ such that either $j=i_p$ or $k=i_p$. Then
  $\{x_{j,u}, x_{k,v}\}$ has an endpoint in the set
  \begin{equation*}
    C = \{x_{i_1,t},\dots,x_{i_c,t} \,|\, t=0,\dots,s\}.
  \end{equation*}
  This shows $C$ is a vertex cover of $\mathcal{J}_s (G)$.

  Next we show $C$ is a minimal vertex cover.  Fix indices
  $p\in\{1,\dots,c\}$ and $t\in\{0,\dots,s\}$. We show that
  $C\setminus \{x_{{i_p},t}\}$ is not a vertex cover.  Since
  $\{x_{i_1},\dots,x_{i_c}\}$ is a minimal vertex cover of $G$, there
  is an edge $\{x_j,x_k\}$ whose endpoints are not contained in the
  set $\{x_{i_1},\dots,x_{i_c}\} \setminus \{x_{i_p}\}$. Note however
  that one endpoint of $\{x_j,x_k\}$ is contained in
  $\{x_{i_1},\dots,x_{i_c}\}$. If we assume, without loss of
  generality, that endpoint is $x_j$, then we have $j=i_p$ and
  $k\notin \{i_1,\dots,i_c\}$. Now $\{x_{j,t},x_{k,0}\}$ is an edge of
  $\mathcal{J}_s (G)$ whose endpoints are not contained in
  $C\setminus \{x_{{i_p},t}\}$. Therefore $C$ is a minimal vertex
  cover.
\end{proof}

Recall that a graph is \emph{well covered} if all its minimal vertex
covers have the same cardinality \cite[\S 1]{MR289347}. From the
algebraic perspective, this means the edge ideal of the graph is
equidimensional, i.e., all its irreducible components have the same
dimension. As complete graphs are well covered, Example \ref{exa:4}
shows that the jets of a well covered graph need not be well
covered. In fact, applying Propositions \ref{pro:3} and \ref{pro:4}
often allows one to show the jets of a graph are not well
covered. However, consider the following example.


\begin{example}
  Consider the 4-cycle $C_4 = (a c b d)$. Figures \ref{fig:3},
  \ref{fig:4}, and \ref{fig:5} show the cycle along with its first and
  second jets. The minimal vertex covers of $C_4$ are $\{a,b\}$, and
  $\{c,d\}$; in particular, $C_4$ is well covered. Using the Macaulay2
  package \texttt{EdgeIdeals}, we found the following information. The
  minimal vertex covers of $\mathcal{J}_1 (C_4)$ are:
  \begin{equation*}
    \{a_0,b_0,c_0,d_0\}, \{a_0,b_0,a_1,b_1\}, \{c_0,d_0,c_1,d_1\}.
  \end{equation*}
  The minimal vertex covers of $\mathcal{J}_2 (C_4)$ are:
  \begin{equation*}
    \begin{split}
      &\{a_0,b_0,c_0,d_0,a_1,b_1\}, \{a_0,b_0,c_0,d_0,c_1,d_1\},\\
      &\{a_0,b_0,a_1,b_1,a_2,b_2\}, \{c_0,d_0,c_1,d_1,c_2,d_2\}.
    \end{split}
  \end{equation*}
  This data shows that the jet graphs $\mathcal{J}_1 (C_4)$
  and $\mathcal{J}_2 (C_4)$ are well covered.
\end{example}

Recall that a \emph{very well covered} graph is one whose minimal
vertex covers all contain half the vertices of the graph
\cite{MR677051}. For example, the complete bipartite graphs $K_{n,n}$
are very well covered, since their minimal vertex covers are precisely
the two parts in the bipartition of the vertex set. Note that the
4-cycle is isomorphic to the complete bipartite graph
$K_{2,2}$. Inspired by the previous example, we prove the following
result.

\begin{theorem}\label{thm:4}
  For all integers $n\geqslant 1$ and $s\geqslant 0$, the graph
  $\mathcal{J}_s (K_{n,n})$ is very well covered.
\end{theorem}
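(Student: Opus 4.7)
The plan is to classify every maximal independent set $I$ of $\mathcal{J}_s(K_{n,n})$ and show each satisfies $|I| = n(s+1)$. Since minimal vertex covers are exactly the complements of maximal independent sets and $\mathcal{J}_s(K_{n,n})$ has $2n(s+1)$ vertices, this is equivalent to the theorem.

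Label $K_{n,n}$ with bipartition $\{x_1,\dots,x_n\} \cup \{y_1,\dots,y_n\}$. By Lemma \ref{lem:1}, $\mathcal{J}_s(K_{n,n})$ is bipartite with parts $X = \{x_{i,t}\}$ and $Y = \{y_{j,u}\}$, and $\{x_{i,t}, y_{j,u}\}$ is an edge precisely when $t+u \leq s$. Given a maximal independent set $I$, set $I_X = I \cap X$ and $I_Y = I \cap Y$; I will split into cases according to whether these two pieces are empty.

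If $I_Y = \emptyset$, then no edge obstructs placing any $x_{i,t}$ into $I$, so by maximality $I_X = X$ and $|I| = n(s+1)$; the case $I_X = \emptyset$ is symmetric. Suppose now both are nonempty, and define the threshold $\alpha = \min\{u : y_{j,u} \in I_Y \text{ for some } j\}$. I claim $I_X = \{x_{i,t} : t \geq s - \alpha + 1\}$. For the inclusion ``$\subseteq$'', fix $j_0$ with $y_{j_0,\alpha} \in I_Y$; then $x_{i,t} \in I_X$ together with independence forces $t + \alpha > s$, i.e.\ $t \geq s-\alpha+1$. For ``$\supseteq$'', if $t \geq s-\alpha+1$ then every $y_{j,u} \in I_Y$ satisfies $u \geq \alpha$, hence $t+u \geq s+1$, so $x_{i,t}$ has no neighbor in $I$ and maximality puts $x_{i,t} \in I$. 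This gives $|I_X| = n\alpha$. Running the symmetric argument with the threshold $\beta = \min\{t : x_{i,t} \in I_X\}$, which the description above forces to equal $s-\alpha+1$, yields $I_Y = \{y_{j,u} : u \geq \alpha\}$ of cardinality $n(s-\alpha+1)$. Summing, $|I| = n\alpha + n(s-\alpha+1) = n(s+1)$.

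The main obstacle is making the interplay between independence and maximality watertight: independence controls the lower end of each ``column'' of $I$ while maximality fills in the upper end, and I need to show these two mesh consistently across the two sides of the bipartition. A subtle point is verifying that the symmetric computation of $I_Y$ recovers a set whose own minimum second index is again $\alpha$, which both confirms self-consistency of the argument and shows that as $\alpha$ ranges over $\{1,\dots,s\}$ one gets distinct maximal independent sets, with no valid configurations missed (the extremal values $\alpha = 0$ and $\alpha > s$ being ruled out by nonemptyness of $I_X$ and by the range of the second index, respectively).
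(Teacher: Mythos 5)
Your proof is correct and is essentially the paper's own argument viewed through complementation: the paper classifies the minimal vertex covers as the sets $C_p = \{x_{i,t} \mid t \le s-p\} \cup \{y_{j,u} \mid u \le p-1\}$ using the threshold $p = \min\{u \mid y_{j,u} \notin C \text{ for some } j\}$, which is exactly your $\alpha$, so your maximal independent sets are precisely the complements of the paper's covers. The only difference is organizational --- phrasing everything in terms of maximal independent sets lets you skip the paper's separate verification that each candidate is a cover and is minimal --- but the core classification and counting are identical.
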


\begin{proof}
  Denote $x_1,\dots,x_n,y_1,\dots,y_n$ the vertices of $K_{n,n}$ so
  that the edges of $K_{n,n}$ are of the form $\{x_i,y_j\}$ for
  $i,j\in \{1,\dots,n\}$. For $s\in \mathbb{N}$, the graph
  $\mathcal{J}_s (K_{n,n})$ has vertex set:
  \begin{equation*}
    \{x_{1,t},\dots,x_{n,t},y_{1,t},\dots,y_{n,t} \,|\, t=0,\dots,s\}.
  \end{equation*}
  In particular, $\mathcal{J}_s (K_{n,n})$ has $2n(s+1)$ vertices.  By
  Lemma \ref{lem:1}, the edges of $\mathcal{J}_s (K_{n,n})$ are
  precisely the sets $\{x_{i,t}, y_{j,u}\}$ with $t+u\leqslant s$.
  
  For $p=0,\dots,s+1$, define the set
  \begin{equation*}
    C_p = \{ x_{1,t},\dots,x_{n,t} \,|\, t=0,\dots,s-p\}
    \cup \{ y_{1,u},\dots,y_{n,u} \,|\, u=0,\dots,p-1\}.
  \end{equation*}
  We claim that the sets $C_p$ for $p=0,\dots,s+1$ are all the minimal
  vertex covers of $\mathcal{J}_s (K_{n,n})$. Note that the sets $C_p$
  all have cardinality $n(s-p+1) + np = n(s+1)$. Therefore, once the
  claim is proven, it will follow that $\mathcal{J}_s (K_{n,n})$ is
  very well covered.

  We start by showing each set $C_p$ is a vertex cover of
  $\mathcal{J}_s (K_{n,n})$. Fix the integer $p\in
  \{0,\dots,s+1\}$. An edge $\{x_{i,t},y_{j,u}\}$ of
  $\mathcal{J}_s (K_{n,n})$ has $t+u\leqslant s$. If $t\leqslant s-p$,
  then $x_{i,t} \in C_p$. If $t> s-p$, then we have
  $t+u\leqslant s<t+p$ so $u<p$ and $y_{j,u} \in C_p$. Either way, the
  edge $\{x_{i,t},y_{j,u}\}$ has a vertex in $C_p$, therefore $C_p$ is
  a vertex cover of $\mathcal{J}_s (K_{n,n})$.

  Next we show each set $C_p$ is a minimal vertex cover of
  $\mathcal{J}_s (K_{n,n})$.  Fix the integer $p\in \{1,\dots,s+1\}$
  and let $q \in \{0,\dots,p-1\}$. We show that, for every
  $j\in\{1,\dots,n\}$, the set $C_p \setminus \{y_{j,q}\}$ is not a
  vertex cover of $\mathcal{J}_s (K_{n,n})$. In fact, for every
  $i\in\{1,\dots,n\}$, the edge $\{x_{i,s-q}, y_{j,q}\}$ has no
  endpoint in $C_p \setminus \{y_{j,q}\}$ because $q\leqslant p-1 < p$
  implies $s-p<s-q$. The proof when a vertex $x_{i,q}$ is removed from
  $C_0$ is similar. Hence each $C_p$ is a minimal vertex cover of
  $\mathcal{J}_s (K_{n,n})$.

  Finally, we show that every minimal vertex cover of
  $\mathcal{J}_s (K_{n,n})$ is equal to $C_p$ for some
  $p=0,\dots,s+1$. Let $C$ be a minimal vertex cover of
  $\mathcal{J}_s (K_{n,n})$. If
  $C \supseteq \{ y_{1,u},\dots,y_{n,u} \,|\, u=0,\dots,s\}$, then
  $C= C_{s+1}$ by minimality. Otherwise, let
  \begin{equation*}
    p = \min \{ u\in \mathbb{N} \,|\,
    y_{j,u} \notin C \text{ for some } j=1,\dots,n\}.
  \end{equation*}
  It follows from this definition that, for every $j\in\{1,\dots,n\}$
  and every $u\in\{0,\dots,p-1\}$, $y_{j,u} \in C$. We will show that
  $C = C_p$. It is enough to show that $x_{i,t} \in C$ for every
  $i\in\{1,\dots,n\}$ and every $t\in\{0,\dots,s-p\}$. Let
  $t\in \{0,\dots,s-p\}$. Then, for every $i,j\in\{1,\dots,n\}$,
  $\{x_{i,t}, y_{j,p}\}$ is an edge of $\mathcal{J}_s (K_{n,n})$
  because $t+p\leqslant s-p+p=s$. However, by definition of $p$, there
  exists $j\in\{1,\dots,n\}$ such that $y_{j,p} \notin C$. Since $C$
  is a vertex cover, we conclude that $x_{i,t} \in C$ for every
  $i\in\{1,\dots,n\}$. Therefore $C \supseteq C_p$. By minimality, we
  conclude $C=C_p$.

  Thus our claim is proven and $\mathcal{J}_s (K_{n,n})$ is very well
  covered.
\end{proof}

\section{Open Questions}
\label{sec:open-questions}

We collect in this section a few unanswered questions that attracted
our interest while working on this project.

In \S \ref{sec:when-are-jet}, we established that jets of graphs with
diameter greater than or equal to 3 are not co-chordal (with the
possible exception of the zero jets). We also showed that jets of
graphs of diameter 1 are co-chordal (cf. Proposition \ref{pro:1}),
while the situation can be varied for graphs of diameter 2. The
following remains open.

\begin{problem}
  Characterize the graphs $G$ of diameter 2 such that for all
  $s\in \mathbb{N}$ the jets $\mathcal{J}_s (G)$ are co-chordal.
\end{problem}

Example \ref{exa:3} contains an example of a co-chordal graph of
diameter 2 whose first jets are co-chordal and whose second jets are
not co-chordal. This raises the following question.

\begin{problem}
  Given an arbitrary positive integer $n$, is there a co-chordal graph
  $G$ of diameter 2 such that for every positive integer
  $i\leqslant n$ the graph $\mathcal{J}_i (G)$ is co-chordal while the
  graph $\mathcal{J}_{n+1} (G)$ is not co-chordal?
\end{problem}

In \S \ref{sec:vertex-covers-jet}, we illustrated how certain minimal
vertex covers of jet graphs are related to the minimal vertex covers
of the original graph (cf. Proposition \ref{pro:3} and
\ref{pro:4}). We also noticed that our constructions do not capture
all minimal vertex covers of jet graphs (cf. Example
\ref{exa:4}). This leads us to the following question.

\begin{problem}
  Given a graph $G$ and $s\in \mathbb{N}$, is it possible to describe
  all minimal vertex covers of $\mathcal{J}_s (G)$ in terms of the
  minimal vertex covers of $G$? If not for an arbitrary graph $G$, can
  this be achieved for certain families of graphs?
\end{problem}

Even if minimal vertex covers of a jet graph cannot be easily
described, it would still be interesting to know their number, as this
is also the number of irreducible components in the decomposition of
the edge ideal of the jet graph.

\begin{problem}
  Given a graph $G$ and $s\in \mathbb{N}$, how many minimal vertex
  covers does $\mathcal{J}_s (G)$ have and how is this number related
  to the number of minimal vertex covers of $G$? What is the
  asymptotic behavior of the number of minimal vertex covers of
  $\mathcal{J}_s (G)$ as $s$ goes to infinity? If not for an arbitrary
  graph $G$, can either question be answered for certain families of
  graphs?
\end{problem}

We concluded \S \ref{sec:vertex-covers-jet} by observing that jets of
well covered graphs are not necessarily well covered. However, we also
showed that the jets of the complete bipartite graphs $K_{n,n}$ are
very well covered (cf. Theorem \ref{thm:4}).  The complete bipartite
graphs $K_{n,n}$ are not the only very well covered graphs. For
example, as observed by Favaron \cite[\S 0]{MR677051}, the graph in
Figure \ref{fig:10} is very well covered.
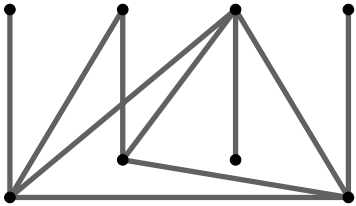
\begin{figure}[htb]
  \definecolor{wrwrwr}{rgb}{0.3803921568627451,0.3803921568627451,0.3803921568627451}
  \begin{tikzpicture}[line cap=round,line join=round,>=triangle 45,x=1cm,y=1cm,scale=.5]
    \draw [line width=2pt,color=wrwrwr] (0,5)--(0,0)--(9,0)--(9,5);
    \draw [line width=2pt,color=wrwrwr] (3,5)--(0,0)--(6,5)--(3,1)--(3,5);
    \draw [line width=2pt,color=wrwrwr] (3,1)--(9,0)--(6,5)--(6,1);
    \begin{scriptsize}
      \draw [fill=black] (0,0) circle (4pt);
      \draw [fill=black] (9,0) circle (4pt);
      \draw [fill=black] (3,1) circle (4pt);
      \draw [fill=black] (6,1) circle (4pt);
      \draw [fill=black] (0,5) circle (4pt);
      \draw [fill=black] (3,5) circle (4pt);
      \draw [fill=black] (6,5) circle (4pt);
      \draw [fill=black] (9,5) circle (4pt);
    \end{scriptsize}
  \end{tikzpicture}
  \caption{Favaron's very well covered graph}
  \label{fig:10}
\end{figure}
With the help of Macaulay2 and the package \texttt{EdgeIdeals}, we
found that the jets of this graph up to the third order are all very
well covered. In light of these computations and the result of Theorem
\ref{thm:4}, we make the following conjecture.

\begin{conjecture}
  If $G$ is a very well covered graph, then for all $s\in \mathbb{N}$
  the graph $\mathcal{J}_s (G)$ is very well covered.
\end{conjecture}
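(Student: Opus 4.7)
The plan is to extend the argument of Theorem \ref{thm:4} by appealing to a structural characterization of very well covered graphs due to Favaron \cite{MR677051}: such a graph $G$ on $2n$ vertices without isolated vertices admits a perfect matching $M = \{\{a_k, b_k\} : k=1,\ldots,n\}$ whose matched pairs interact in a restricted, bipartite-like fashion with one another. The idea is to lift $M$ to a perfect matching of $\mathcal{J}_s (G)$ and check that the structural conditions are preserved.

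First, I would lift the matching as follows. For each matched edge $\{a_k, b_k\} \in M$, include in the lifted matching $\widetilde{M}$ the $s+1$ edges $\{a_{k,i}, b_{k,s-i}\}$ for $i = 0,1,\ldots,s$. By Lemma \ref{lem:1}, these are all edges of $\mathcal{J}_s (G)$, because $i + (s-i) = s$ and $\{a_k, b_k\}$ is an edge of $G$. Since $\mathcal{J}_s (G)$ has $2n(s+1)$ vertices and $\widetilde{M}$ consists of $n(s+1)$ pairwise disjoint edges covering every vertex exactly once, $\widetilde{M}$ is a perfect matching of $\mathcal{J}_s (G)$. This already forces $|C| \geqslant n(s+1)$ for every vertex cover $C$ of $\mathcal{J}_s (G)$, giving the lower half of the result.

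The heart of the proof is then to verify that $\widetilde{M}$ satisfies Favaron's conditions inside $\mathcal{J}_s (G)$. Lemma \ref{lem:1} is the translation tool: any edge $\{v_{k,i}, w_{\ell,j}\}$ of $\mathcal{J}_s (G)$ forces both $\{v_k, w_\ell\}$ to be an edge of $G$ and $i + j \leqslant s$. For example, the absence of any triangle through a lifted matched edge $\{a_{k,i}, b_{k,s-i}\}$ reduces directly to the absence of a common neighbor of $a_k$ and $b_k$ in $G$, which is part of the Favaron conditions on $M$. The conditions governing interactions between two distinct lifted matched pairs translate by the same recipe, provided one tracks the subscripts with care.

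The main obstacle I anticipate is precisely this bookkeeping: the integer constraint $i + j \leqslant s$ imposes an asymmetry between low and high jet levels that has no counterpart in $G$ itself, so the Favaron conditions must be invoked in the right form for each case. Should the direct lifting prove awkward, a fallback route is to generalize the classification argument of Theorem \ref{thm:4} directly, showing that every minimal vertex cover $C$ of $\mathcal{J}_s (G)$ is determined by a threshold assignment along the matched pairs of $M$, forcing $|C \cap \{a_{k,0},\ldots,a_{k,s},b_{k,0},\ldots,b_{k,s}\}| = s+1$ for each $k$ and hence $|C| = n(s+1)$.
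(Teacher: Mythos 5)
The first thing to say is that the paper does not prove this statement: it appears in Section~\ref{sec:open-questions} as a conjecture, supported only by the special case $G=K_{n,n}$ (Theorem~\ref{thm:4}) and by machine computations of the jets of Favaron's graph up to order three. So there is no proof of record to compare yours against; what you have written is a proposed resolution of an open problem and should be judged as such.

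With that caveat, your strategy looks sound, and the bookkeeping you flag as the main obstacle actually closes cleanly. Favaron's theorem \cite{MR677051} characterizes very well covered graphs (without isolated vertices) as those admitting a perfect matching $M$ in which no matched edge lies in a triangle and, for each matched edge $\{a,b\}$, every neighbor of $a$ is adjacent to every neighbor of $b$ (apart from the degenerate choices involving $a$ and $b$ themselves). Your lift $\widetilde{M}=\{\{a_{k,i},b_{k,s-i}\}\}$ is indeed a perfect matching of $\mathcal{J}_s(G)$, and Lemma~\ref{lem:1} gives exactly the inequality needed for the second condition: a neighbor $u_{m,p}$ of $a_{k,i}$ has $p\leqslant s-i$, a neighbor $v_{\ell,q}$ of $b_{k,s-i}$ has $q\leqslant i$, hence $p+q\leqslant s$; Favaron's condition in $G$ supplies the edge $\{u_m,v_\ell\}$, and Lemma~\ref{lem:1} returns the edge $\{u_{m,p},v_{\ell,q}\}$ in $\mathcal{J}_s(G)$. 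The triangle condition reduces to the one in $G$ as you say, because $\mathcal{J}_s(G)$ has no edges within a fiber. To turn the sketch into a proof you still need to (1) quote Favaron's property (P) precisely and use both implications of her theorem --- the forward direction to extract $M$ from the hypothesis on $G$, the converse to conclude that $\mathcal{J}_s(G)$ is very well covered from $\widetilde{M}$ --- and (2) dispose of the subcases where $u_m=b_k$ or $v_\ell=a_k$, which also work (with strict inequalities $p<s-i$, respectively $q<i$) since the relevant edge then lies inside the lifted pair. Your fallback of classifying all minimal covers by thresholds along matched pairs is likely harder than the matching route: unlike $K_{n,n}$, a general very well covered graph has minimal covers that are not of pure threshold form, so the direct generalization of the proof of Theorem~\ref{thm:4} would require more care than the Favaron argument.
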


Due to the limited amount of time available for this project, we chose
to focus on graph theoretic properties that play a significant role in
combinatorial commutative algebra. However, there are many other graph
theoretic properties and invariants that we did not consider. Some of
these properties and invariants could merit further investigation in
relation to jets of graphs. For example, are there jet graphs that are
Eulerian or Hamiltonian? When are jet graphs planar? How are the
diameter, matching number, and Tutte polynomial of jet graphs related
to the same invariants of the original graphs?

\begin{problem}
  \begin{itemize}
  \item Consider a graph theoretic property. Is it preserved by taking
    jets?  If so, to what order?
  \item Consider a graph theoretic invariant. How is this invariant of
    the $s$-jets of a graph related to the same invariant of the
    original graph?
  \end{itemize}
\end{problem}

Another aspect we did not have time to consider is how this work could
generalize beyond graphs. The paper of Goward and Smith applies to
arbitrary monomial ideals, not only quadratic ideals. As such, our
definition of graphs of $s$-jets from \S \ref{sec:backgr-defin} can be
extended to hypergraphs \cite{MR1013569}.

\begin{problem}
  Extend the definition of jets to hypergraphs and study their
  properties. For example, do jets preserve the chromatic number of a
  hypergraph? How are vertex covers of jet hypergraphs related to
  vertex covers of the original hypergraph?
\end{problem}

In addition, the radical of the jets of a squarefree monomial ideal is
again a monomial ideal by Goward and Smith. Another avenue for
research is the study of these jet ideals using Stanley-Reisner theory
\cite{MR2110098}.

Our project being motivated by combinatorial commutative algebra, we
limited ourselves to undirected graphs. It is however possible to
consider constructions similar to our jets of graphs in the context of
directed graphs. The following definition was suggested to us by
Dr.~Robert Short and builds upon Lemma \ref{lem:1}.
\begin{definition}
  Let $G = (V,A)$ be a simple directed graph with vertex set $V$ and
  directed edge set $A$. The \emph{graph of $s$-jets} of $G$ is the
  directed graph with
  \begin{itemize}
  \item vertex set $\{v_0,v_1,\dots,v_s \,|\, v\in V\}$, and
  \item directed edge set
    $\{ (v_i, w_j) \,|\, (v,w)\in A, i+j\leqslant s\}$.
  \end{itemize}
\end{definition}
It may be interesting to consider properties of jets of directed
graphs and explore possible connections with algebra.

\newcommand{\etalchar}[1]{$^{#1}$}


\begin{thebibliography}{BCG{\etalchar{+}}16}

\bibitem[BCG{\etalchar{+}}16]{MR3566223}
Cristiano Bocci, Susan Cooper, Elena Guardo, Brian Harbourne, Mike Janssen, Uwe
  Nagel, Alexandra Seceleanu, Adam Van~Tuyl, and Thanh Vu.
\newblock The {W}aldschmidt constant for squarefree monomial ideals.
\newblock {\em J. Algebraic Combin.}, 44(4):875--904, 2016.

\bibitem[Ber89]{MR1013569}
Claude Berge.
\newblock {\em Hypergraphs}, volume~45 of {\em North-Holland Mathematical
  Library}.
\newblock North-Holland Publishing Co., Amsterdam, 1989.
\newblock Combinatorics of finite sets, Translated from the French.

\bibitem[BM08]{MR2368647}
J.~A. Bondy and U.~S.~R. Murty.
\newblock {\em Graph theory}, volume 244 of {\em Graduate Texts in
  Mathematics}.
\newblock Springer, New York, 2008.

\bibitem[DL01]{MR1905328}
Jan Denef and Fran\c{c}ois Loeser.
\newblock Geometry on arc spaces of algebraic varieties.
\newblock In {\em European {C}ongress of {M}athematics, {V}ol. {I}
  ({B}arcelona, 2000)}, volume 201 of {\em Progr. Math.}, pages 327--348.
  Birkh\"{a}user, Basel, 2001.

\bibitem[DNS19]{MR3990994}
Emanuela De~Negri and Enrico Sbarra.
\newblock On jet schemes of pfaffian ideals.
\newblock {\em Collect. Math.}, 70(3):479--491, 2019.

\bibitem[EM09]{MR2483946}
Lawrence Ein and Mircea Musta{\c{t}}{\u{a}}.
\newblock Jet schemes and singularities.
\newblock In {\em Algebraic geometry---{S}eattle 2005. {P}art 2}, volume~80 of
  {\em Proc. Sympos. Pure Math.}, pages 505--546. Amer. Math. Soc., Providence,
  RI, 2009.

\bibitem[Fav82]{MR677051}
Odile Favaron.
\newblock Very well covered graphs.
\newblock {\em Discrete Math.}, 42(2-3):177--187, 1982.

\bibitem[FHT]{EdgeIdealsSource}
Chris Francisco, Andrew Hoefel, and Adam~Van Tuyl.
\newblock {EdgeIdeals: A \emph{Macaulay2} package. Version~1.0.2}.
\newblock A \emph{Macaulay2} package available at
  \url{https://github.com/Macaulay2/M2/tree/master/M2/Macaulay2/packages}.

\bibitem[FHVT09]{MR2878668}
Christopher~A. Francisco, Andrew Hoefel, and Adam Van~Tuyl.
\newblock Edge{I}deals: a package for (hyper)graphs.
\newblock {\em J. Softw. Algebra Geom.}, 1:1--4, 2009.

\bibitem[Fr90]{MR1171260}
Ralf Fr\"{o}berg.
\newblock On {S}tanley-{R}eisner rings.
\newblock In {\em Topics in algebra, {P}art 2 ({W}arsaw, 1988)}, volume~26 of
  {\em Banach Center Publ.}, pages 57--70. PWN, Warsaw, 1990.

\bibitem[FG65]{MR186421}
D.~R. Fulkerson and O.~A. Gross.
\newblock Incidence matrices and interval graphs.
\newblock {\em Pacific J. Math.}, 15:835--855, 1965.

\bibitem[GJS14]{MR3270176}
Sudhir~R. Ghorpade, Boyan Jonov, and Bharath~A. Sethuraman.
\newblock Hilbert series of certain jet schemes of determinantal varieties.
\newblock {\em Pacific J. Math.}, 272(1):147--175, 2014.

\bibitem[GS]{M2}
Daniel~R. Grayson and Michael~E. Stillman.
\newblock Macaulay2, a software system for research in algebraic geometry.
\newblock Available at \url{http://www.math.uiuc.edu/Macaulay2/}.

\bibitem[GS06]{MR2229478}
Russell~A. Goward, Jr. and Karen~E. Smith.
\newblock The jet scheme of a monomial scheme.
\newblock {\em Comm. Algebra}, 34(5):1591--1598, 2006.

\bibitem[KS05a]{MR2100311}
Toma{\v{z}} Ko{\v{s}}ir and Bharath~A. Sethuraman.
\newblock Determinantal varieties over truncated polynomial rings.
\newblock {\em J. Pure Appl. Algebra}, 195(1):75--95, 2005.

\bibitem[KS05b]{MR2166800}
Toma{\v{z}} Ko{\v{s}}ir and Bharath~A. Sethuraman.
\newblock A {G}roebner basis for the {$2\times2$} determinantal ideal {$\mod
  t\sp 2$}.
\newblock {\em J. Algebra}, 292(1):138--153, 2005.

\bibitem[MS05]{MR2110098}
Ezra Miller and Bernd Sturmfels.
\newblock {\em Combinatorial commutative algebra}, volume 227 of {\em Graduate
  Texts in Mathematics}.
\newblock Springer-Verlag, New York, 2005.

\bibitem[Mus01]{MR1856396}
Mircea Musta{\c{t}}{\u{a}}.
\newblock Jet schemes of locally complete intersection canonical singularities.
\newblock {\em Invent. Math.}, 145(3):397--424, 2001.
\newblock With an appendix by David Eisenbud and Edward Frenkel.

\bibitem[Nas95]{MR1381967}
John~F. Nash, Jr.
\newblock Arc structure of singularities.
\newblock volume~81, pages 31--38 (1996). 1995.
\newblock A celebration of John F. Nash, Jr.

\bibitem[Plu70]{MR289347}
Michael~D. Plummer.
\newblock Some covering concepts in graphs.
\newblock {\em J. Combinatorial Theory}, 8:91--98, 1970.

\bibitem[VT13]{MR3184120}
Adam Van~Tuyl.
\newblock A beginner's guide to edge and cover ideals.
\newblock In {\em Monomial ideals, computations and applications}, volume 2083
  of {\em Lecture Notes in Math.}, pages 63--94. Springer, Heidelberg, 2013.

\bibitem[Yue07]{MR2389248}
Cornelia Yuen.
\newblock Jet schemes of determinantal varieties.
\newblock In {\em Algebra, geometry and their interactions}, volume 448 of {\em
  Contemp. Math.}, pages 261--270. Amer. Math. Soc., Providence, RI, 2007.

\end{thebibliography}
\end{document}